\newtheorem{theorem}{Theorem}
\newtheorem{lemma}[theorem]{Lemma}
\newtheorem{corollary}[theorem]{Corollary}
\newtheorem{proposition}[theorem]{Proposition}
\theoremstyle{definition}
\newtheorem{definition}{Definition}
\newtheorem{hypothesis}{Hypothesis}
\newtheorem{remark}[theorem]{Remark}
\newtheorem{example}[theorem]{Example}
\newcommand{\vd}{\,\mathrm{d}}
\newcommand{\dd}{\mathrm{d}}
\newcommand{\bx}{\mathbf{x}}
\newcommand{\by}{\mathbf{y}}
\newcommand{\bz}{\mathbf{z}}
\newcommand{\RR}{\mathbb{R}}
\newcommand{\NN}{\mathbb{N}}
\newcommand{\SSS}{\mathbb{S}}
\newcommand{\fE}{\mathfrak{E}}
\DeclareMathOperator{\Lip}{Lip}
\begin{document}

\title{Global existence for rough differential
equations under linear growth conditions}

\author{Massimilano Gubinelli\footnote{CEREMADE, Université Paris-Dauphine \& CNRS UMR 7534, 
Place du Maréchal De Lattre De Tassigny,
75775 Paris CEDEX 16, France, 
\texttt{massimilano.gubinelli@ceremade.dauphine.fr}}
\and Antoine Lejay\footnote{Project-team TOSCA, 
(Institut \'Elie Cartan UMR 7502,
Nancy-Universit\'e, CNRS, INRIA), IECN, Campus scientifique, BP 239, 
54506 Vand\oe uvre-l\`es-Nancy CEDEX, France,
\texttt{Antoine.Lejay@iecn.u-nancy.fr}}}

\date{\today}

\maketitle

\begin{abstract}
We prove existence of global solutions for differential equations 
driven by
a geometric rough path under the condition that the vector fields have linear growth. We show by an explicit counter-example that the linear growth condition is not sufficient if the driving rough path is not geometric.  This settle a long-standing open question in the theory of rough paths. So in the geometric setting we recover the usual sufficient condition for differential equation. The proof rely on a simple mapping of the differential equation from the Euclidean space to a manifold to obtain a rough differential equation with bounded coefficients. 
\end{abstract}

\noindent\textbf{Keywords.} Rough differential equation, global existence, change of variable formula, 
explosion in a finite time, rough path, geometric rough paths.

\section{Introduction}

Let us consider the controlled differential equation 
\begin{equation}
\label{eq-31}
y_t=y_0+\int_0^t f(y_s)\vd x_s
\end{equation}
for a Lipschitz continuous function $f:\RR\to\RR$, and a smooth path $x:[0,T]\to\RR$.
If $\psi(t)$ is solution to the ordinary differential equation
$\psi'(t)=f(\psi(t))\vd t$, which does not explode in a finite time, then it is well known that 
the solution to \eqref{eq-31} is $y_t=\psi(x_t)$. From this, 
one may deduce that \eqref{eq-31} may be extended to continuous paths $x:[0,T]\to\RR$, 
whatever their regularity. H.~Doss \cite{doss77a} and H.~Sussmann \cite{sussmann78a} have proposed 
this as a simple way to defined stochastic differential equations.
In addition,~\eqref{eq-31} has a \emph{global solution}, \textit{i.e.} a solution that
does not explodes in a finite time.

Using the commutation of the flows, these results may be extended to deal 
with controlled differential equations of type
\begin{equation*}
y_t=y_0+\int_0^t \sum_{i=1}^m f^i(y_s)\vd x_s^i
\end{equation*}
for a family $(f^i)_{i=1}^m$ of \emph{commuting} Lipschitz vector fields 
(\textit{i.e.} when their Lie brackets vanishes), 
and a path $x=(x^1,\dotsc,x^m)$ from $[0,T]$ to $\RR^m$.

For general vector fields without conditions on Lie brackets, the theory
of rough paths introduced by T.~Lyons and his co-authors 
(see for example \cites{friz06c,gubinelli04a,lejay-rp,lejay-rp2,lyons98a,lyons02b,lyons06b}) has provided 
a way to define the solution to 
\begin{equation}
\label{eq-32}
y_t=y_0+\int_0^t f(y_s)\vd x_s
\end{equation}
for paths $x$ with values in a Banach space $U$, and a vector field $f$ 
such that for any $y\in V$,  $f(y)$ is a linear map from $U$ to $V$, $V$ being another Banach space.
In addition to some additional regularity of $f$, the core idea of the theory
is that the driving path $x$ shall be extended as some enhanced path
with values in some non-commutative truncated tensor space. 
The various component of the tensor provide information about iterated integrals of the path $x$ which naturally appears in formal expansions of the solution to~\eqref{eq-32}. This additional data is at the basis of a generalized notion of integral over $x$ which give a extension by continuity of the (Riemman-Lebesgue-Stieljes-Young)  integral formulation of the differential equation. 
The truncation
order of the tensor space depends on $\lfloor p\rfloor$, where $p$ is 
such that this enhanced path is of finite $p$-variation.
Equation~\eqref{eq-32} is then called a \emph{rough differential equation} (RDE).

\medskip
This paper deals with existence of global solutions to the RDE.
For sake of simplicity and clear exposition of the arguments, we prefer to  restrict ourselves to the analysis of the case $2\le p< 3$. This is  the first non-trivial situation appearing when dealing with rough-paths, already presenting most of the features of the general  theory. It is worth stressing that the content of the paper can be generalized at any $p$ without any substantial changes in the arguments.

In all the different approaches to the solution of the RDE~\eqref{eq-32} present in the literature~\cites{lyons98a,feyel, davie05a, gubinelli04a}  the usual condition for global in time existence of solutions
is that both $f$ and $\nabla f$ shall be bounded.
In particular the boundedness on $\nabla f$ is a necessary condition to 
avoid explosion:  A.M. Davie provided a nice counter-example in~\cite{davie05a}.
Yet, by comparing
with the case of a smooth driving path, one may wonder if global 
existence holds for any vector field $f$ of linear growth.

Since the original work of T.~Lyons~\cite{lyons98a} it is clear that if $f$ is a linear vector field then the solution of the RDE is global. In this case indeed it is possible to give an explicit form of this solution as a linear combination of all the iterated integrals of $x$ with coefficients given by tensor powers of the linear map $f$. The bounds provided by Lyons on the growth of the iterated integrals of a rough paths are enough to have norm convergence of the series.

In \cite{friz06c}, P.~Friz and N.~Victoir give a sufficient condition on the vector field 
to have global existence when the driving signal is a geometric rough path.
By this, we mean a rough path 
which is limit of enhanced paths obtained by lifting smooth paths
using their iterated integrals. Their arguments rely on an extension
of the ideas of A.M. Davie~\cite{davie05a}. Essentially, in the case $2<p<3$, along with linear growth they require that  the combination $f \cdot \nabla f$ shall be globally Holder for suitable index.

In \cite{lejay09a}, one of us has provided some conditions for global existence
in a more general context, including non-geometric rough paths.
Excepted for $p=1$, the conditions found cover only sub-linear
growth (depending on $p$) and logarithmic growth (for any $p$) and leave open
the issue of the linear growing $f$.

It is worthwhile to mention that in infinite dimension, requiring bounded
vector fields is a very restrictive condition. In~\cite{gubinelli-tindel}
global solutions  in the context of stochastic partial differential equations
defined by Young-like integrals are proved to exist for fairly general vector
fields which fails to be bounded (in particular they are bounded with respect
to some norm but of linear growth with respect to another norm, situation
peculiar to the infinite dimensional case). It is also shown that linear vector
fields admit global solutions for SPDEs not  covered by standard rough path
theory due to the distributional character of the driving signal.
In~\cite{rooted-ns,kdv} some examples of global solutions to (generalized) RDEs
are provided  in an infinite-dimensional context where the vector field is of
polynomial character. They rely on special features like smallness conditions
on the initial data or on conservation laws.

\medskip
The main result of this paper is the proof  that in the case of geometric rough paths, the linear growth condition on the vector field $f$ is sufficient for global existence. To our knowledge this result answers a natural question which has been a long-standing open issue in rough path theory.
The method of proof relies on mapping the RDE to an RDE with bounded coefficients. Along the way we  prove some change of variable formula for RDEs which can be used as a basis of a general theory of RDEs on manifolds.

 Moreover, for non-geometric rough path, we give a sufficient condition for global existence  related to the behavior of $f\cdot \nabla f$. This last result extends the observation of P.~Friz and N.~Victor to the general non-geometric setting. Our proof is different
and more direct than the one contained in \cite{friz06c}.  By means of an example we show also that there exists non-geometric RDEs with linear growing vector fields which explode in finite time so that some condition on $f\cdot \nabla f$ seems necessary.

It would be interesting to extends these result for general Banach space valued RDEs. This seems not entirely trivial since we exploit some differentiability properties of the norm $|\cdot|$ and of the map $x \mapsto x/|x|$.

Most of the literature on rough paths have been developed in the standard framework of Lyons but recently some papers are written using the language and results associated to the notion of \emph{controlled paths} introduced in~\cite{gubinelli04a}. In this paper we decided to stick to the classical framework enriched with the notion of \emph{partial rough path} (see Sect.~\ref{sec:partial}) which allow a finer analysis of the objects involved in the change of variable formula. However it is worthwhile to note that all our results
can be equivalently stated in terms of controlled paths, change of variable formula for controlled paths and the estimates contained in~\cite{gubinelli04a} on solutions of RDEs in the space of controlled paths.   

\bigskip
\noindent\textbf{Outline.} The paper is organized as follows. Sec.~\ref{sec:prelim} contains some preliminary material in order to fix notation and to define the notion of partial controlled paths and the related RDE estimates. In Sec.~\ref{sec:change} we prove the basic change of variable formula for geometric rough paths and as a by-product a similar formula for non-geometric rough paths. Sec.~\ref{sec:log} will introduce our basic tool for the proof of global existence: a simple logarithmic change of variable which transforms linear growing vector fields to bounded ones. Finally in Sec.~\ref{sec:global} we prove the absence of explosion under linear growth condition when the driving signal is geometric. We give also a new proof of some sufficient condition of Victoir and Friz for non-explosion in the non-geometric setting and we conclude with an explicit example of explosion with linear vector fields in the non-geometric case.

\section{Preliminary considerations}
\label{sec:prelim}
In the following $U$, $V$ and $W$ will stay for generic Banach spaces.
We denote by $L(U,V)$ the vector space of linear maps from $U$ to $V$.

\begin{definition}
\label{def-1}
For $\gamma\in(0,1]$, a $\Lip(\gamma)$ function from $U$ to $V$
is a map $f:U\to V$ which is $\gamma$-Hölder continuous.
A $\Lip(1+\gamma)$ function from $U$ to $V$ is
$\Lip(1)$ map from $U$ to $V$ such that 
there exists a bounded  $\Lip(\gamma)$  map~$\nabla f$ from $U$ to $L(U,V)$ that satisfies
for any $(u,u')\in U\times U$,
\begin{equation}
\label{eq-26}
\left|f(u)-f(u')-\nabla f(u')(u-u')\right|\\
\leq C|u'-u|^{1+\gamma}.
\end{equation}
The smallest constant $C$ such that \eqref{eq-26} holds for
any $(u,u')\in U\times U$ is denoted by $H_\gamma(\nabla f)$.
\end{definition}

Inequality \eqref{eq-26} is true if $\nabla f$ is $\gamma$-Hölder
continuous.

\begin{remark}
Note that in this definition is a bit different from the one usually used \cites{stein70a,lyons98a,lyons02b},
since we do not impose that $f$ is bounded (but $\nabla f$ is), 
so that $f$ has at most a linear growth.
\end{remark}

\begin{remark}
Later, we may also consider functions $f$ that are defined only on a subset 
$\Omega$ of the Banach space $U$. In this case, by a $\Lip(\gamma)$ functions
we mean a $\gamma$-H\"older continuous function on $\Omega$ and 
by a $\Lip(1+\gamma)$ functions, we mean a function as above for which there exists
$\nabla f$ from $\Omega$ to $L(U,V)$ satisfying~\eqref{eq-26}.
\end{remark}

If $f$ is bounded, then we set 
$\|f\|_{\Lip}=\max\{H_\gamma(\nabla f),\|\nabla f\|_{\infty},\|f\|_{\infty}\}$, 
which is its \emph{Lipschitz norm}.

Given a vector space $U$ we let $T(U)=\oplus_{k\ge 0} U^{\otimes k}$ be the
tensor algebra of $U$ (with $U^{\otimes 0}=\RR$) and $T_2(U)=\RR\oplus U\oplus (U\otimes U)$ the
projection on the elements of degree smaller or equal than $2$ which is again an algebra for the
tensor product. We denote by $\pi_{U^{\otimes k}}:T(U)\to U^{\otimes k}$ the
projection on the subspace of degree $k$.  The space $T_2(U)$ will be equipped
with a compatible norm so that it become a Banach algebra.
 
Let $\bx$ be a rough path of finite $p$-variation controlled by $\omega$ with $p\in[2,3)$
with values in~$T_2(U)$. That is $(\bx_t)_{t\in[0,T]}$ is a path
with values in the subset $\{1\}\oplus U\oplus (U\otimes U)$ of $T_2(U)$ ---~which is
a Lie group for the tensor product when keeping only the terms of degree $0$, $1$ or $2$~--- such that $\bx_{s,t}=\bx_s^{-1}\otimes\bx_t$
satisfies $|\pi_U(\bx_{s,t})|\leq C\omega(s,t)^{1/p}$ and 
$|\pi_{U\otimes U}(\bx_{s,t})|\leq C\omega(s,t)^{2/p}$ for some constant $C$
and any $0\leq s\leq t\leq T$. 

The theory of rough paths has been the subject of several books and lecture notes
\cites{friz06c,gubinelli04a,lejay-rp,lejay-rp2,lyons98a,lyons02b,lyons06b}, 
so that we do not give here more insights of this theory.

However, let us recall here the most technical core of the theory, which 
we call the sewing Lemma, following \cite{sewing-lemma}.

\begin{lemma}[Sewing Lemma]
\label{lem-sewing}
An almost rough path $\tilde\bz$ on $U$ is a map $\tilde \bz: [0,T]^2\to T_2(U)$ such that, for some $\theta>1$,  $|\tilde \bz_{s,t}-\tilde \bz_{s,u} \otimes \tilde \bz_{u,t}|\le C \omega(t,s)^{\theta}$ for all $0\le s\le u \le t\le T$. Given an almost rough path on $U$  then there exists only one rough path $\bz= \mathcal{J}(\tilde \bz)$ on $U$ such that $\bz_0=1$ and $|\bz_{s,t} - \tilde \bz_{s,t}|\le C'\omega(t,s)^{\theta}$.
\end{lemma}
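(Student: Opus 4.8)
The plan is to realise $\bz_{s,t}$ as the limit of the ``Riemann products'' of $\tilde\bz$ along refining partitions of $[s,t]$, which is the non-commutative sewing argument behind \cite{sewing-lemma}. Fix $0\le s\le t\le T$; for a finite subdivision $D=\{s=t_0<t_1<\dots<t_N=t\}$ put $\tilde\bz_D:=\tilde\bz_{t_0,t_1}\otimes\tilde\bz_{t_1,t_2}\otimes\cdots\otimes\tilde\bz_{t_{N-1},t_N}\in T_2(U)$. The crucial quantitative input is the cost of removing one interior point: if $D'=D\setminus\{t_i\}$, then $\tilde\bz_D-\tilde\bz_{D'}$ differs only through the factor $\tilde\bz_{t_{i-1},t_i}\otimes\tilde\bz_{t_i,t_{i+1}}-\tilde\bz_{t_{i-1},t_{i+1}}$, which the hypothesis bounds by $C\,\omega(t_{i-1},t_{i+1})^\theta$; since the surrounding factors stay uniformly bounded in the Banach algebra $T_2(U)$ on $[0,T]$, one gets $|\tilde\bz_D-\tilde\bz_{D'}|\le C'\,\omega(t_{i-1},t_{i+1})^\theta$.

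Next comes the combinatorial step. By superadditivity of the control $\omega$ each term $\omega(t_{i-1},t_i)$ is counted at most twice in $\sum_{i=1}^{N-1}\omega(t_{i-1},t_{i+1})$, so this sum is $\le 2\,\omega(s,t)$ and hence some interior point satisfies $\omega(t_{i-1},t_{i+1})\le \tfrac{2}{N-1}\,\omega(s,t)$. Dropping such a ``cheapest'' point repeatedly down to the trivial partition $\{s,t\}$ and summing the resulting series $\sum_{k\ge 1}k^{-\theta}<\infty$ (finite precisely because $\theta>1$) yields
\begin{equation*}
|\tilde\bz_D-\tilde\bz_{s,t}|\le C''\,\omega(s,t)^\theta,\qquad C''=C'\,2^\theta\zeta(\theta),
\end{equation*}
uniformly in $D$. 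Applied on each subinterval of a partition $D_1$, the same estimate shows that refining $D_1$ changes the product by at most $C''(\mathrm{mesh}\,D_1)^{\theta-1}\omega(s,t)$ up to a bounded factor, so $(\tilde\bz_D)_D$ is Cauchy along refinements (pass to a common refinement) and $\bz_{s,t}:=\lim_{\mathrm{mesh}\,D\to 0}\tilde\bz_D$ exists. Splitting a partition of $[s,t]$ at an intermediate time $u$ and passing to the limit gives the multiplicativity $\bz_{s,t}=\bz_{s,u}\otimes\bz_{u,t}$ (in particular $\bz_{s,s}=1$, so $\bz_0=1$), while the displayed bound passes to the limit as $|\bz_{s,t}-\tilde\bz_{s,t}|\le C''\,\omega(s,t)^\theta$; the $p$-variation bounds on $\pi_U(\bz)$ and $\pi_{U\otimes U}(\bz)$ then follow from this closeness together with the corresponding bounds on $\tilde\bz$, so $\bz$ is a genuine rough path.

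For uniqueness, let $\bz,\bz'$ be two rough paths as in the statement, so that $|\bz_{s,t}-\bz'_{s,t}|\le 2C'\,\omega(s,t)^\theta$, and I would compare them level by level. At the first level $\pi_U$ is additive under $\otimes$, so $g_{s,t}:=\pi_U(\bz_{s,t})-\pi_U(\bz'_{s,t})$ satisfies $g_{s,t}=g_{s,u}+g_{u,t}$ and $|g_{s,t}|\le 2C'\,\omega(s,t)^\theta$; subdividing $[s,t]$ into $n$ pieces of $\omega$-size at most $\omega(s,t)/n$ forces $|g_{s,t}|\le 2C'\,\omega(s,t)^\theta n^{1-\theta}\to 0$, hence $g\equiv 0$. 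Knowing the first-level parts agree, the second-level relation $\pi_{U\otimes U}(\bz_{s,t})=\pi_{U\otimes U}(\bz_{s,u})+\pi_{U\otimes U}(\bz_{u,t})+\pi_U(\bz_{s,u})\otimes\pi_U(\bz_{u,t})$ (and the identical one for $\bz'$) shows $\pi_{U\otimes U}(\bz_{s,t})-\pi_{U\otimes U}(\bz'_{s,t})$ is again additive and $O(\omega^\theta)$, hence vanishes by the same argument; thus $\bz=\bz'$.

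The genuinely delicate point is the combinatorial point-dropping estimate together with the bookkeeping needed to keep the intermediate Riemann products uniformly bounded while iterating the removal inside the non-commutative group $\{1\}\oplus U\oplus(U\otimes U)$; everything else is either the standard lemma ``an additive function dominated by $\omega^\theta$ with $\theta>1$ vanishes'' or a routine passage to the limit. In practice I would first run the whole argument at the first level, where the group is abelian and one recovers the classical Young sewing, and only then bootstrap to the second level using that $\pi_U(\bz)$ is by then already known to have finite $p$-variation.
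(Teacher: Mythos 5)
The paper itself does not prove this lemma: it is stated as recalled background, with a pointer to Feyel--de La Pradelle--Mokobodzki \cite{sewing-lemma}, so there is no internal proof to compare against. Your argument is a correct, self-contained rendering of the classical ``remove-the-cheapest-point'' sewing argument going back to Young and Lyons: define Riemann products over partitions, control the cost of removing one interior point by the almost-multiplicativity defect, use superadditivity of $\omega$ to find a cheap point, iterate and sum $\sum k^{-\theta}<\infty$ to get a uniform maximal inequality, then pass to a common refinement for existence of the limit and argue level by level (additive function $O(\omega^\theta)$ with $\theta>1$ vanishes) for uniqueness. This is the standard route and it proves what is asserted.

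Two small points worth tightening. First, your justification of $\sum_{i=1}^{N-1}\omega(t_{i-1},t_{i+1})\le 2\,\omega(s,t)$ via ``each $\omega(t_{i-1},t_i)$ is counted at most twice'' implicitly treats $\omega$ as additive; since $\omega$ is only superadditive, the clean argument is to split the sum into odd and even $i$ — each class consists of $\omega$ evaluated on pairwise adjacent disjoint intervals, so each partial sum is $\le\omega(s,t)$, giving the factor~$2$. Second, you correctly flag the boundedness of the flanking factors in the non-commutative product as the delicate point, but it deserves to be made explicit that this is where the implicit hypothesis that $\tilde\bz$ itself has finite $p$-variation (degree-$0$ part equal to $1$, $|\pi_U(\tilde\bz_{s,t})|\lesssim\omega(s,t)^{1/p}$, $|\pi_{U\otimes U}(\tilde\bz_{s,t})|\lesssim\omega(s,t)^{2/p}$) enters: in the nilpotent algebra $T_2(U)$ one expands the product of $1+a_i+b_i$ and bounds the resulting finite sums using these controls; without this the constant $C'$ in your point-dropping estimate is not a priori uniform in the partition. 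With these clarifications the proof is complete and, at least in spirit, consistent with the source the paper refers to.
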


Let $f$ be a $\Lip(1+\gamma)$-map from $V$ to $L(U,V)$ and  
let us consider the RDE
\begin{equation}
\label{eq-rde}
\by_t=a+\int_0^t f(y_s)\vd \bx_s
\end{equation}
for $a\in V$.
By this, we mean that $\by$ is a rough path of finite $p$-variation controlled
by $\omega$ such that $\by$ lives in $T_2(U\oplus V)$, such that $\bx = \pi_{T_2(U)}(\by)$ and which satisfies the equality
\begin{equation*}
\by_t=a+\int_0^t \widehat{f}(y_s)\vd \by_s
\end{equation*}
for the $\Lip(1+\gamma)$-differentiable form from $U\oplus V$ to $U\oplus V$
defined by $\widehat{f}(v)(u',v')=u'+f(v)u'$.
We used the convention that the italic letter 
$y$ is path in~$V$ obtained by 
the projection of the rough path denoted by a bold letter $\by$ (i.e. $y= \pi_V(\by)$).

Another equivalent characterization of $\by$ is given by the fact that it is the unique rough path satisfying $\by = a + \mathcal{J}(\tilde \by)$ where $\tilde \by$ is the almost rough path
\begin{equation*}
\begin{split}
\tilde \by_{s,t} & =  \, \bx_{s,t}+f(y_s) \pi_U(\bx_{s,t}) + (f\cdot\nabla f)(y_s)\pi_{U\otimes U}(\bx_{s,t})
\\ & \quad + f(y_s) \otimes f(y_s)\pi_{U\otimes U}(\bx_{s,t}) .
\end{split}
\end{equation*} 

We know that $\by$ exists at least up to some explosion time $T$, 
which is characterized by $\lim_{t\to T-}|\by_t|=+\infty$
for $T<+\infty$ (See for example~\cite{lejay09a}).
Besides, if $f$ is bounded, then no explosion occurs and in 
this case, we say that there exists a \emph{global solution to \eqref{eq-rde}}.
Uniqueness (and continuity of the map $\bx\mapsto\by$ in the $p$-variation topology)
is granted only if $f$ is a $\Lip(2+\gamma)$-vector field from $V$ to $L(U,V)$
(see \cite{davie05a} for a counter-example).

If $x$ is a smooth path, then one may solve first
the ODE
\begin{equation}
\label{eq-ode}
y_t=a+\int_0^t f(y_s)\vd x_s
\end{equation}
in $V$ and then construct $\bx$ and $\by$ as (in the following expression, 
the elements to be summed live in different spaces)
\begin{align*}
\bx_t&=1_{T(U)}+x_t+\int_0^t (x_s-x_0)\otimes\dd x_s,\\
\by_t&=1_{T(V)}+\bx_t+y_t+\int_0^t (y_s-y_0)\otimes\dd x_s
+\int_0^t (y_s-y_0)\otimes\dd y_s\\
& \qquad
+\int_0^t (x_s-x_0)\otimes\dd y_s.
\end{align*}
In this case, $\bx$ and $\by$ are \emph{smooth rough paths}.

\subsection{Transformation of a partial rough path by a smooth function}
\label{sec:partial}
Let $x$ and $y$ two paths of finite $p$-variation respectively
with values in $U$ and~$V$, and such that the cross-iterated
integral $\int \dd y\otimes\dd x$ exists and is controlled by~$\omega$. This means
that $\int_s^t \dd y\otimes \dd x$ lives in $V\otimes U$ and satisfies 
for some constant~$L$,
\begin{gather}
\label{eq-24}
\left|\int_s^t \dd y\otimes\dd x\right|\leq L\,\omega(s,t)^{p/2},\\
\label{eq-25}
\int_s^t \dd y\otimes\dd x=\int_s^r \dd y\otimes\dd x+\int_r^t \dd y\otimes \dd x
+(y_r-y_s)\otimes (x_t-x_r),
\end{gather}
for all $0\leq s\leq r\leq t\leq T$.

Note that knowing a cross-iterated integral between $y$ and $x$ is sufficient to properly define an 
integral of type $\int f(y_s)\vd x_s$ with value in a Banach space $W$
for a $\Lip(1+\gamma)$-vector field from $V$ to $L(U,W)$,
by defining this integral as the element associated
to the almost rough path $f(y_s)x_{s,t}+\nabla f(y_s)\int_s^t \dd y\otimes\dd x$
using the non-commutative Sewing Lemma~\ref{lem-sewing} while keeping only the 
elements in $U\oplus V\oplus (V\otimes U)$.

We then call the triple $\left(x,y,\int \dd y\otimes\dd x\right)$
a \emph{partial rough path}. The distance between two partial rough paths 
$\left(x,y,\int \dd y\otimes\dd x\right)$ and 
$\left(x',y',\int \dd y'\otimes\dd x'\right)$ 
is given by 
\begin{equation*}
\sup_{0\leq s<t\leq T}
\max\left\{\frac{|x_{s,t}-x'_{s,t}|}{\omega(s,t)^{1/p}},
\frac{|y_{s,t}-y'_{s,t}|}{\omega(s,t)^{1/p}},
\frac{\left|\int_s^t \dd y\otimes\vd x-\int_s^t \dd y'\otimes\vd x'\right|}{\omega(s,t)^{p/2}}\right\}.
\end{equation*}
The corresponding topology is called the \emph{topology of $p$-variation generated by $\omega$}.

\begin{lemma}
\label{lem-3}
Let $\phi$ be a $\Lip(1+\kappa)$ map from $V$ to $W$, $\kappa>p$.
Then there exists a cross-iterated integral for $\phi(y)$ 
and $x$ (that is a function that satisfies \eqref{eq-24}
and \eqref{eq-25} with $y$ replaced by $\phi(y)$) which 
extend the smooth  natural iterated integral when $x$ and $y$ are smooth.
In addition, the map $\left(y,x,\int \dd y\otimes\dd x\right)
\mapsto \left(\phi(y),x,\int \dd\phi(y)\otimes\dd x\right)$
is continuous with respect to the topology of $p$-variation generated by $\omega$.
\end{lemma}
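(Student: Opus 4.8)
I would construct the cross-iterated integral for $\phi(y)$ from the Sewing Lemma, starting from an explicit first-order germ, and then read off both the compatibility with the smooth integral and the continuity from the uniqueness clause of that lemma. Write $A_{s,t}=\int_s^t\dd y\otimes\dd x\in V\otimes U$ and, guided by $\phi(y_t)-\phi(y_s)\approx\nabla\phi(y_s)(y_t-y_s)$, introduce the germ
\[
\widetilde z_{s,t}:=\nabla\phi(y_s)\,A_{s,t}\ \in\ W\otimes U .
\]
A short computation with \eqref{eq-25} shows that $\widetilde z_{s,t}-\widetilde z_{s,u}-\widetilde z_{u,t}$ equals $\bigl(\phi(y_u)-\phi(y_s)\bigr)\otimes(x_t-x_u)$ --- the defect required of a genuine cross-iterated integral for $\phi(y)$ and $x$ by \eqref{eq-25} --- plus $\bigl(\nabla\phi(y_s)-\nabla\phi(y_u)\bigr)A_{u,t}$ plus a term tensoring the Taylor remainder $\nabla\phi(y_s)(y_u-y_s)-\bigl(\phi(y_u)-\phi(y_s)\bigr)$ with $x_t-x_u$. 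Because $\phi\in\Lip(1+\kappa)$ with $\kappa>p$ --- so $\nabla\phi$ is bounded and Lipschitz and that remainder is $O(|y_u-y_s|^{2})$ --- and $x$, $y$ have finite $p$-variation while $A$ satisfies \eqref{eq-24}, superadditivity of $\omega$ makes both correction terms $O(\omega(s,t)^{3/p})$, with $3/p>1$ precisely because $p<3$. Thus $\widetilde z$ is an almost cross-iterated integral, and applying the Sewing Lemma in the additive form suited to cross-integrals --- exactly as it is used in Section~\ref{sec:partial} to define $\int f(y)\,\dd x$, retaining only the $W\otimes U$ component --- produces a unique $I$ satisfying \eqref{eq-25} with $y$ replaced by $\phi(y)$ and $|I_{s,t}-\widetilde z_{s,t}|\le C\,\omega(s,t)^{3/p}$. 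Since $|\widetilde z_{s,t}|\le\|\nabla\phi\|_\infty\,|A_{s,t}|$ obeys \eqref{eq-24} and the correction is of the strictly higher order $3/p$, $I$ obeys \eqref{eq-24} too; we set $\int_s^t\dd\phi(y)\otimes\dd x:=I_{s,t}$.

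For the remaining two assertions I would again use uniqueness in the Sewing Lemma. If $x$ and $y$ are smooth then $\int_s^t\bigl(\phi(y_r)-\phi(y_s)\bigr)\otimes\dd x_r$ is an honest integral, satisfies the same defect identity exactly, and differs from $\widetilde z_{s,t}$ by $O(\omega(s,t)^{3/p})$; hence it coincides with $I_{s,t}$, which is the announced extension. For continuity I would compare two partial rough paths at $p$-variation distance $\le\varepsilon$ whose $\omega$-norms are bounded by a common $M$; one must also keep $|y_0-y'_0|\le\varepsilon$, since the increments of $\phi(y)$ depend on the base point $y_0$ as soon as $\phi$ is nonlinear --- a dependence the increment-only metric of Section~\ref{sec:partial} hides. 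As both paths stay in a fixed bounded set on which $\phi,\nabla\phi,\nabla^2\phi$ are bounded, adding and subtracting shows that the two germs differ by at most $C_M\,\varepsilon\,\omega(s,t)^{2/p}$ and their defects by at most $C_M\,\varepsilon\,\omega(s,t)^{3/p}$; feeding this into the (linear) stability estimate of the Sewing Lemma gives $|I_{s,t}-I'_{s,t}|\le C_M\,\varepsilon\,\omega(s,t)^{2/p}$ for $\omega(s,t)$ small, and the usual patching over a partition of $[0,T]$ removes the smallness restriction. Together with the trivial estimates on the increments of $\phi(y)-\phi(y')$, this is continuity in the topology of $p$-variation.

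The existence part is routine once one observes that only the additive (componentwise) form of the Sewing Lemma is available: we are given the single cross-integral $\int\dd y\otimes\dd x$, not the full family of iterated integrals of $y$, so $(x,y)$ cannot be lifted to a $T_2$-valued rough path and multiplicative sewing is out of reach. The genuine work is the continuity step --- turning the Sewing Lemma into a quantitative, locally Lipschitz dependence on the data, uniform over bounded sets of partial rough paths --- and, more subtly, recognising that, $\phi$ being nonlinear, the statement only makes sense once the base point of $y$ is counted as part of the partial rough path.
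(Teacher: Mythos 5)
Your proof is essentially the paper's own: you apply the Sewing Lemma to the germ $\widetilde z_{s,t}=\nabla\phi(y_s)\int_s^t\dd y\otimes\dd x$, estimate the defect using Chen's relation for $\int\dd y\otimes\dd x$, the $\Lip(\kappa)$ modulus of $\nabla\phi$, and the Taylor remainder bound \eqref{eq-26}, and then read off the compatibility-with-smooth-integrals and continuity from uniqueness and stability in the Sewing Lemma. The paper packages the germ as $z_{s,t}=x_{s,t}+\phi(y_t)-\phi(y_s)+\nabla\phi(y_s)\int_s^t\dd y\otimes\dd x$ and invokes the non-commutative Sewing Lemma while keeping only the $U\oplus W\oplus(W\otimes U)$ components, which is exactly your additive argument in disguise (the level-one Chen relation is automatic), so your closing remark that ``multiplicative sewing is out of reach'' is unfounded — it is precisely what the paper does, just restricted componentwise; your observation that the continuity statement implicitly requires controlling the base point $y_0$, since $\phi(y)$ depends on the value and not only the increments of $y$, is a valid refinement the paper leaves tacit.
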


With the above hypothesis on $\phi$, 
$\phi(y)$ is of finite $p$-variation controlled by~$\omega$
on $V$, and 
this last lemma implies of course that $\int g(\phi(y_s))\vd x_s$
is well defined for a $\Lip(1+\gamma)$-vector field $g$ from $W$ to $L(U,Y)$ for a Banach space~$Y$.

\begin{proof}
Set
\begin{equation*}
z_{s,t}=\nabla \phi(y_s)\int_s^t \dd y\otimes\dd x+\phi(y_t)-\phi(y_s)
+x_t-x_s
\end{equation*}
living in $U\oplus V\oplus (V\otimes U)$. With abuse of notation here we note $\nabla \phi(y_s)$ the application $\nabla \phi(y_s) \otimes 1 \in L(V,W) \otimes L(U,U)\simeq L(V \otimes U, W \otimes U)$. The context will always be sufficient to remove the ambiguity.

It follows that, by keeping only the terms in $U\oplus V\oplus (V\otimes U)$,
\begin{multline*}
z_{s,t}-z_{s,r}\otimes z_{r,t}
=\nabla \phi(y_s)(y_r-y_s)\otimes (x_r-x_s)\\
+(\phi(y_r)-\phi(y_s))\otimes (x_r-x_s)
+(\nabla\phi(y_s)-\nabla \phi(y_r))\int_r^t \dd y\otimes\dd x.
\end{multline*}
In addition
\begin{equation*}
\phi(y_r)-\phi(y_s)=\int_0^1 \nabla \phi(y_s+\tau(y_r-y_s))(y_r-y_s)\vd \tau.
\end{equation*}
It follows from standard estimates that 
\begin{equation*}
|z_{s,t}-z_{s,r}\otimes z_{r,t}|\leq C\,\omega(s,t)^{(1+\kappa)/p}
\end{equation*}
where $C$ depends only on $\|\nabla \phi\|_{\infty}$, $H_\gamma(\nabla \phi)$, 
$\|y\|_{p,\omega}$, $\|x\|_{p,\omega}$, $L$ in \eqref{eq-24} and~$\omega(0,T)$.
Consequently, $z$ is an almost rough path. Applying the sewing map $\mathcal{J}$ of Lemma~\ref{lem-sewing}
to $z$ while keeping only the terms in $U\oplus V\oplus (V\otimes U)$, we get an triple
$\left(x,\phi(y),\int\dd\phi(y)\otimes\dd x\right)$, where 
$\int\dd\phi(y)\otimes\dd x$ is a cross-iterated integral between $\phi(y)$ and $x$.

With similar computations, it is easily shown that 
$\left(x,y,\int \dd y\otimes\dd x\right)\mapsto \int \dd\phi(y)\otimes\dd x$ is continuous.

If $x$ and $y$ are smooth, 
then 
\begin{equation*}
\begin{split}
& \left|\int_s^t (\phi(y_r)-\phi(y_s))\vd x_r
-\nabla \phi(y_s)\int_s^t \dd y\otimes\dd x
\right|\\
& \qquad \leq \left|\int_s^t \left(\int_0^1 (\nabla \phi(y_s+\tau(y_r-y_s))-\nabla\phi(y_s)) \vd \tau (y_r-y_s)\right) \otimes\dd x_r
\right|\\
& \qquad \leq C\omega(s,t)^{(2+\kappa)/p}.
\end{split}
\end{equation*}
which means that $\int_0^t \dd\phi(y)\otimes\dd x$ is the rough 
path associated to the almost rough path 
$\nabla \phi(y_s)\int_s^t \dd y\otimes\dd x$.
\end{proof}

\subsection{A bound on the solutions of RDE  for bounded vector fields}

Let $h$ a bounded $\Lip(1+\gamma)$-vector field from $V$ to $L(U,V)$, 
$\gamma\in(0,1]$, $2+\gamma>p$.
By this, we mean that $h$ satisfies~\eqref{eq-26}.
Consider the RDE in $V$
\begin{equation}
\label{eq-19}
\bz_t=a+\int_0^t h(z_s)\vd \bx_s
\end{equation}

\begin{proposition}
\label{prop-3}
Under the above hypotheses, 
there exists a constant $C$ that depends only on $\|h\|_{\Lip}$, 
$\gamma$ and $p$ such that 
\begin{equation}
\label{eq-18}
\sup_{t\in[0,T]}|z_t-z_0|\leq C(1+\|\bx\|_{p,\omega}^p\omega(0,T)).
\end{equation}
\end{proposition}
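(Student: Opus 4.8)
The plan is to deduce the global bound \eqref{eq-18} from a purely \emph{local} estimate by a greedy-partition (chaining) argument, the relevant control being $\varpi(s,t):=\|\bx\|_{p,\omega}^{p}\,\omega(s,t)$ rather than $\omega$ itself. Replacing $\omega$ by $\varpi$ is harmless --- $\varpi$ is again a control --- and it normalises $\bx$, in the sense that $|\pi_U(\bx_{s,t})|\le\varpi(s,t)^{1/p}$ and $|\pi_{U\otimes U}(\bx_{s,t})|\le\varpi(s,t)^{2/p}$; this is what ultimately forces all the constants below to depend on $\|h\|_{\Lip},\gamma,p$ alone.

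The first and main step is a local a priori estimate: there exist $\ell\in(0,1]$ and $C_{1}>0$, depending only on $\|h\|_{\Lip},\gamma,p$, such that for every $0\le s\le t\le T$ with $\varpi(s,t)\le\ell$ one has $|z_{u,v}|\le C_{1}\,\varpi(u,v)^{1/p}$ for all $s\le u\le v\le t$. To prove it I would start from the almost rough path description of the solution recalled just before Section~\ref{sec:partial} (with $\widehat f$ built from $h$), write $z_{u,v}=h(z_{u})\,\pi_U(\bx_{u,v})+(h\cdot\nabla h)(z_{u})\,\pi_{U\otimes U}(\bx_{u,v})+R_{u,v}$, and use the Sewing Lemma~\ref{lem-sewing} to bound the remainder by $|R_{u,v}|\le C_{*}\big(1+M_{s,t}^{1+\gamma}\big)\,\varpi(u,v)^{\theta}$, where $M_{s,t}:=\sup_{s\le u<v\le t}|z_{u,v}|/\varpi(u,v)^{1/p}$ (finite, since $z$ is of finite $p$-variation controlled by $\omega$), $\theta:=(2+\gamma)/p>1$ (using $2+\gamma>p$), and $C_{*}=C_{*}(\|h\|_{\Lip},\gamma,p)$. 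Here boundedness of $h$ is used decisively: $|h(z_{u})|\le\|h\|_{\infty}$ and $|(h\cdot\nabla h)(z_{u})|\le\|h\|_{\infty}\|\nabla h\|_{\infty}$, so the two explicit terms do not grow with $z$. Dividing by $\varpi(u,v)^{1/p}$ and taking the supremum over $[s,t]$ gives, for $\varpi(s,t)\le\ell$,
\[
M_{s,t}\ \le\ \|h\|_{\infty}+\|h\|_{\infty}\|\nabla h\|_{\infty}+C_{*}\big(1+M_{s,t}^{1+\gamma}\big)\,\varpi(s,t)^{(1+\gamma)/p}.
\]
Shrinking $\ell$ and running the usual connectedness/bootstrap argument in the length of the interval --- the map $\delta\mapsto M_{s,s+\delta}$ is non-decreasing with $\limsup_{\delta\to0}M_{s,s+\delta}\le\|h\|_{\infty}(1+\|\nabla h\|_{\infty})$, so the set of $\delta\le\ell$ on which it stays below, say, $3(\|h\|_{\infty}+\|h\|_{\infty}\|\nabla h\|_{\infty}+C_{*})$ is non-empty, open and closed in $(0,\ell]$ --- one concludes $M_{s,t}\le C_{1}$. (This local estimate is essentially the content of the local well-posedness theory for RDEs with bounded coefficients and can also be extracted from \cite{davie05a} or \cite{lejay09a}.)

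The second step is the chaining. Fix $t\in[0,T]$ and pick $0=t_{0}<t_{1}<\dots<t_{n}=t$ with $\varpi(t_{i-1},t_{i})=\ell$ for $i<n$ and $\varpi(t_{n-1},t_{n})\le\ell$; superadditivity of $\varpi$ forces $n\le\varpi(0,t)/\ell+1$. Applying the local estimate on each $[t_{i-1},t_{i}]$,
\[
|z_{t}-z_{0}|\ \le\ \sum_{i=1}^{n}|z_{t_{i-1},t_{i}}|\ \le\ C_{1}\sum_{i=1}^{n}\varpi(t_{i-1},t_{i})^{1/p}\ \le\ C_{1}\,\ell^{1/p}\,n\ \le\ C_{1}\,\ell^{1/p}\Big(\frac{\varpi(0,t)}{\ell}+1\Big),
\]
which is at most $C\big(1+\varpi(0,T)\big)=C\big(1+\|\bx\|_{p,\omega}^{p}\,\omega(0,T)\big)$ with $C=C(\|h\|_{\Lip},\gamma,p)$; taking the supremum over $t\in[0,T]$ gives \eqref{eq-18}.

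I expect the only genuinely delicate point to be the local a priori estimate, specifically the self-referential appearance of $M_{s,t}$ inside the sewing remainder: the constant furnished by Lemma~\ref{lem-sewing} for the almost rough path $\tilde\by$ depends on the $p$-variation norm of the very path $z$ one is trying to control. This circularity is broken by the continuity/bootstrap argument sketched above, which upgrades the merely-finite bound $M_{s,t}<\infty$ (available because $z$, being a solution, is of finite $p$-variation controlled by $\omega$) to the quantitative, $\|h\|_{\Lip}$-dependent bound on short intervals. Everything else --- the change of control from $\omega$ to $\varpi$, the bookkeeping of the powers of $\|\bx\|_{p,\omega}$ in the Sewing Lemma estimate, and the final summation --- is routine.
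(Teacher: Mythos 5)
Your proof is correct and takes essentially the same approach as the paper: a greedy-partition (chaining) argument based on a local a priori estimate for solutions of RDEs with bounded coefficients, with the count of steps in the partition giving the linear dependence on $\|\bx\|_{p,\omega}^p\omega(0,T)$. The only real difference is cosmetic: the paper simply cites \cite{lejay09a} for the local bound on each short interval, whereas you re-derive it by writing out the sewing remainder and running the bootstrap explicitly.
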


\begin{proof}
From the computations in \cite{lejay09a}, for any $T>0$
and some constant $\mu$ depending on $\|\nabla h\|_{\infty}$,
$H_\gamma(\nabla h)$ and $\gamma$,
there exists for any time $s$ a time $s'$ such that 
$|z_t-z_s|\leq \mu$ and 
\begin{equation}
\label{eq-17}
\omega(s,s')^{1/p}(\|h\|_{\infty}
+\mu\|\nabla h\|_{\infty})\|\bx\|_{p,\omega}\leq K\mu
\end{equation}
for some universal constant $\mu$. If $\omega$ is continuous, 
then we may choose $s'$ so that an equality holds in \eqref{eq-17}. 
Hence, we construct recursively a family $(s_n)_{n\geq 1}$ of times 
such that $\omega(s_{n},s_{n+1})=L\|\bx\|_{p,\omega}^{-p}$, 
where $L$ depends only on the norm $\|h\|_{\Lip}$. 
Let $N$ be the smallest time such that $s_{N+1}\geq T$.
Since $\sum_{i=1}^{N-1} \omega(s_i,s_{i+1})\leq \omega(0,T)\leq
\sum_{i=1}^{N} \omega(s_i,s_{i+1})$, we have 
\begin{equation*}
(N-1) L\|\bx\|^{-p}_{p,\omega}\leq \omega(0,T)
\leq N L\|\bx\|^{-p}_{p,\omega}
\end{equation*}
and 
\begin{equation}
\sup_{t\in[0,T]}|z_t-z_0|\leq N\mu \leq \mu+L'\omega(0,T)\|\bx\|_{p,\omega}^p
\end{equation}
with $L'=\mu/L$, which is a constant that depends only on 
 $\|h\|_{\infty}$, $\|\nabla h\|_{\infty}$, $H_\gamma(h)$,
 $\gamma$ and $p$.
\end{proof}

\section{Change of variable formulas}
\label{sec:change}
\subsection{Geometric rough paths}

Assume $y$ is a solution to some RDE,
we prove here a change of variable formula giving the RDE satisfied by $\phi(y_t)$. We need the following 
regularity hypothesis on~$\phi$.

\begin{hypothesis}
\label{hyp-2}
For $\gamma\in(0,1]$, 
the function $\phi$ is a $\Lip(1+\gamma)$ function from~$V$ to~$W$ 
which is one-to-one between a set $\Gamma\subseteq V$ and a closed set $\Omega\subseteq W$.
\end{hypothesis}

Let $x$ be a smooth path and let $y$ be the solution to \eqref{eq-rde}. 
We assume that $y_t$ belongs to $\Gamma$ for any $t\in[0,T]$.
Then the Newton formula applied to $z_t=\phi(y_t)$ reads
\begin{equation*}
z_t=\phi(y_t)=\phi(a)+\int_0^t \nabla \phi(y_s)f(y_s)\vd x_s
=\phi(a)+\int_0^t  h(z_s)\vd x_s
\end{equation*} 
with, for $z$ in $\Omega$, 
\begin{equation}
\label{eq-14}
h(z)=\nabla \phi(\phi^{-1}(z))f(\phi^{-1}(z)).
\end{equation}

Note that $h$ is defined only on a subspace $\Omega$ of 
a vector space, so that one cannot necessarily 
solve the RDE $\bz_t=\phi(a)+\int_0^t h(z_s)\vd \bx_s$, 
because it involves the derivative of $h$ in directions
that are not necessarily in $\Omega$. In addition, the proofs of existence and 
continuity of solutions of RDE rely on expressions on type
$\int_0^1 \nabla h(z_s+\tau(z_t-z_s))(z_t-z_s)\vd \tau$,
so that $\nabla h$ needs to be defined at least on a convex set containing $\Omega$.
This is why we assume
the following hypotheses.

\begin{hypothesis}
\label{hyp-1}
The function $h$ can be extended to a $\Lip(1+\gamma)$-vector field from $W$ to $L(U,W)$.
\end{hypothesis}

Note that, for Euclidean vector spaces, the Whitney extension 
theorem (see Theorem~\ref{thm-1} below) asserts the existence of such an extension.

Using Lemma~\ref{lem-3}, we see that 
$\int h(z_s)\vd x_s=\int h(\phi(y_s))\vd x_s$
is well defined provided that $\left(x,y,\int \dd y \otimes\dd x\right)$
is a partial rough path, since $h$ is a $\Lip(1+\gamma)$-vector field from $W$ to $L(U,W)$
and $\left(x,\phi(y),\int \dd \phi(y)\otimes\dd x\right)$ is a partial rough path.

If a solution $\by$ to \eqref{eq-rde} exists,
then 
$\left(x,y,\int \dd y\otimes\dd x\right)$ is a 
partial rough path of finite $p$-variation controlled
by $\omega$. In addition if $\bx$ is a geometric rough 
path, then there exists a family $(\bx^n)_{n\in\NN}$
of smooth rough paths that converges to $\bx$ in $p$-variation.
Let $\by^n$ be the solution to the RDE
$\by^n_t=a+\int_0^t f(y^n_s)\vd\bx^n_s$. 
Then one can extract from $\by^n$ the partial 
rough path $\left(x^n,y^n,\int \dd y^n\otimes\dd x^n\right)$
which converges to the partial rough path 
$\left(x,y,\int \dd y\otimes\dd x\right)$.

Letting $z^n=\phi(y^n)$ we have
$z^n_t = \phi(a)+\int_0^t h(z^n_s) dx_s$ and by the convergence of the partial rough paths we have that 
$
\int h(z^n_s) dx_s \to \int h(z_s) dx_s
$.
So we have then proved the following result.

\begin{lemma}
\label{lem-4}
Let $\bx$ be a rough path of finite $p$-variation, 
$\by$ be a solution to \eqref{eq-rde} which we assume to exists up
to time $T$. Let $\phi$ such that Hypotheses~\ref{hyp-2} and~\ref{hyp-1} hold.
Then $z=\phi(y)$ is the solution to 
\begin{equation}
\label{eq-27}
z_t=\phi(a)+\int_0^t h(z_s)\vd x_s
\end{equation}
for $t\leq T$,
where this definition involves the partial rough path
$\left(x,z,\int \dd z\otimes\dd x\right)$ constructed
in Lemma~\ref{lem-4}.
\end{lemma}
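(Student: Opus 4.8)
The statement is essentially an assembly of the pieces built up in the section: the smooth Newton formula, the continuity of the partial-rough-path transform from Lemma~\ref{lem-3}, and the stability of RDE solutions under $p$-variation convergence of the driving signal. So the proof is a limiting argument, and the body of the section already does most of the work; what remains is to state it cleanly.

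First I would recall that $\by$ solving \eqref{eq-rde} yields, by projection, a partial rough path $\left(x,y,\int \dd y\otimes\dd x\right)$ of finite $p$-variation controlled by $\omega$ (this is the characterization of $\by$ via $\tilde\by$ and $\mathcal{J}$). Since $\bx$ is geometric, pick smooth rough paths $\bx^n\to\bx$ in $p$-variation; by continuity of the Itô–Lyons map for the $\Lip(2+\gamma)$ (or, for existence alone, the stability argument valid for $\Lip(1+\gamma)$) vector field $f$, the solutions $\by^n$ of $\by^n_t=a+\int_0^t f(y^n_s)\vd\bx^n_s$ converge to $\by$, and in particular the associated partial rough paths $\left(x^n,y^n,\int \dd y^n\otimes\dd x^n\right)$ converge to $\left(x,y,\int \dd y\otimes\dd x\right)$ in the topology of $p$-variation generated by $\omega$. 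Then I would apply Lemma~\ref{lem-3} with $\phi$ (extended so that Hypothesis~\ref{hyp-1} holds, using the Whitney extension of Theorem~\ref{thm-1} on Euclidean spaces, or directly assuming $\phi$ globally $\Lip(1+\kappa)$): $\left(x^n,\phi(y^n),\int \dd\phi(y^n)\otimes\dd x^n\right)$ is a partial rough path, it converges to $\left(x,\phi(y),\int \dd\phi(y)\otimes\dd x\right)$, and since $h$ is a genuine $\Lip(1+\gamma)$ vector field from $W$ to $L(U,W)$, the integral $\int h(\phi(y_s))\vd x_s$ against this partial rough path is well-defined and is the $p$-variation limit of $\int h(\phi(y^n_s))\vd x^n_s$.

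Next, for each $n$, since $x^n$ is smooth and $y^n$ takes values in $\Gamma$ (here one must note that $y^n_t\in\Gamma$: either this is part of the hypotheses, or it follows because $y^n\to y$ uniformly and $y_t\in\Gamma$ with $\Gamma$ suitably closed — this is the one point that needs care, and I would flag that in the geometric case the relevant continuity gives uniform convergence $y^n\to y$, so $z^n=\phi(y^n)$ stays in $\Omega$, closed by Hypothesis~\ref{hyp-2}), the classical chain rule / Newton formula gives
\begin{equation*}
z^n_t=\phi(y^n_t)=\phi(a)+\int_0^t \nabla\phi(y^n_s)f(y^n_s)\vd x^n_s=\phi(a)+\int_0^t h(z^n_s)\vd x^n_s.
\end{equation*}
Passing to the limit $n\to\infty$, the left side converges to $z_t=\phi(y_t)$ (uniformly in $t$, from $y^n\to y$ and continuity of $\phi$) and the right side converges to $\phi(a)+\int_0^t h(z_s)\vd x_s$ by the previous paragraph, so \eqref{eq-27} holds for all $t\le T$.

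The main obstacle is not analytic but bookkeeping: making sure the partial-rough-path data produced by projecting $\by$ really coincides with the data to which Lemma~\ref{lem-3} and the RDE-integral construction apply, and that the regularity indices line up — Lemma~\ref{lem-3} wants $\phi\in\Lip(1+\kappa)$ with $\kappa>p$ to get the cross-integral, whereas $h$ need only be $\Lip(1+\gamma)$ for the final integral to make sense, so one should check these two hypotheses are both genuinely in force (they are, under Hypotheses~\ref{hyp-2}–\ref{hyp-1} together with the standing assumption $\phi\in\Lip(1+\gamma)$ and the extra smoothness needed for Lemma~\ref{lem-3}). A secondary subtlety is the stay-in-$\Gamma$ issue for the approximants $y^n$; if one cannot guarantee $y^n_t\in\Gamma$ for finite $n$, the clean fix is to work with the already-extended $h$ on all of $W$ from the start, so that $z^n_t=\phi(a)+\int_0^t h(z^n_s)\vd x^n_s$ is read as an identity of genuine RDE solutions in $W$ and the chain rule is only invoked where $y^n$ does lie in $\Gamma$, the rest following by uniqueness of the solution of the RDE for $h$ on $W$.
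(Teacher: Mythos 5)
Your proposal follows essentially the same route as the paper: express the identity via the Newton formula for smooth $x^n$, then pass to the limit using the convergence of the partial rough paths $\left(x^n,y^n,\int\dd y^n\otimes\dd x^n\right)\to\left(x,y,\int\dd y\otimes\dd x\right)$ together with the continuity of the map $\left(x,y,\int\dd y\otimes\dd x\right)\mapsto\left(x,\phi(y),\int\dd\phi(y)\otimes\dd x\right)$ from Lemma~\ref{lem-3}, and the fact that the rough integral $\int h(\phi(y_s))\vd x_s$ depends continuously on the partial rough path data. You correctly add two remarks the paper passes over silently — the stay-in-$\Gamma$ issue for the approximants $y^n$ and the mismatch between the regularity index $\kappa$ used in Lemma~\ref{lem-3} and the $\Lip(1+\gamma)$ standing assumption — both of which are genuine points of care, and your suggested fix (reading the $n$-th identity through the extended vector field $h$ on all of $W$ rather than invoking the chain rule only on $\Gamma$) is the natural way to discharge the first.
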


Provided that $h$ is bounded, it follows directly from Proposition~\ref{prop-3} 
that one gets a bound on $\sup_{t\in[0,T]}|z_t-z_0|$.
The next proposition is then immediate.

\begin{proposition}
\label{prop-1}
Under the above hypotheses, if $h$ in bounded in $W$, 
then there exists a global solution to the RDE \eqref{eq-rde}.
\end{proposition}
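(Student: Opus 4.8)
The plan is to combine the change of variable formula of Lemma~\ref{lem-4} with the a priori estimate of Proposition~\ref{prop-3}. Let $\by$ be a solution of~\eqref{eq-rde}; by the general theory it exists up to a maximal time $T^\ast$, and if $T^\ast<+\infty$ then $\lim_{t\to T^\ast-}|\by_t|=+\infty$. I would show that this cannot happen, so that $\by$ is defined on all of $[0,T]$ and is global. Fix an arbitrary $T'<T^\ast$. Since Hypotheses~\ref{hyp-2} and~\ref{hyp-1} hold and $y_t\in\Gamma$ for every $t$, Lemma~\ref{lem-4} tells us that on $[0,T']$ the path $z=\phi(y)$ solves the RDE~\eqref{eq-27}, whose vector field $h$ is, by assumption, a \emph{bounded} $\Lip(1+\gamma)$ map from $W$ to $L(U,W)$.

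Next I would apply Proposition~\ref{prop-3} to~\eqref{eq-27}, which gives
\[
\sup_{t\in[0,T']}|z_t-z_0|\le C\bigl(1+\|\bx\|_{p,\omega}^p\,\omega(0,T')\bigr)\le C\bigl(1+\|\bx\|_{p,\omega}^p\,\omega(0,T^\ast)\bigr),
\]
with $C$ depending only on $\|h\|_{\Lip}$, $\gamma$ and $p$, and in particular \emph{not} on $T'$ — here I use that $\omega$ is a control (hence nondecreasing in its second argument) and that $\|\bx\|_{p,\omega}$ and $\omega(0,T^\ast)$ are finite because $\bx$ has finite $p$-variation up to $T^\ast$. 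Hence $z_t$ stays, for all $t<T^\ast$, in one fixed bounded subset of the closed set $\Omega$, and inserting this bound into the usual local RDE estimates also bounds the full rough path $\bz$ (and $\|\bz\|_{p,\omega}$) on $[0,T^\ast)$.

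It remains to transfer this control back to $\by$, which is the only delicate point. Since $\phi$ is a bijection from $\Gamma$ onto $\Omega$ (Hypothesis~\ref{hyp-2}), we have $y_t=\phi^{-1}(z_t)$, and the boundedness of $(z_t)_{t<T^\ast}$ forces $(y_t)_{t<T^\ast}$ to remain in a bounded subset of $\Gamma$; this is precisely where the behaviour of $\phi^{-1}$ on $\Omega$ enters, and it is the feature the concrete logarithmic change of variable of Section~\ref{sec:log} is built to provide (there $\phi$ maps the whole space onto a bounded manifold, so $\Omega$ itself is bounded and $\phi^{-1}$ is well behaved on it). With $y$ bounded on $[0,T^\ast)$, the RDE estimates for~\eqref{eq-rde} with the $\Lip(1+\gamma)$ field $f$ bound $\|\by\|_{p,\omega}$ on $[0,T^\ast)$, so $\limsup_{t\to T^\ast-}|\by_t|<+\infty$, contradicting the explosion characterisation; thus $T^\ast=+\infty$. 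The main obstacle is therefore not the rough-path analysis — Proposition~\ref{prop-3} applies verbatim and Lemma~\ref{lem-4} is already available — but the purely geometric statement that $\phi^{-1}$ does not stretch the bounded subset of $\Omega$ found above into an unbounded region of $\Gamma$.
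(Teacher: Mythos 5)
Your argument is essentially the same as the paper's, which compresses the whole thing into one remark (``if an explosion occurs to~\eqref{eq-27} then an explosion occurs to~\eqref{eq-rde}'', used in the contrapositive) resting on Lemma~\ref{lem-4} and Proposition~\ref{prop-3}, exactly the two ingredients you invoke. You are also right to isolate the genuinely delicate step: the transfer of the a priori bound on $z$ back to a bound on $y$ requires $\phi^{-1}$ to send bounded subsets of $\Omega$ to bounded subsets of $\Gamma$ (equivalently, that $|\phi(y)|\to\infty$ whenever $|y|\to\infty$); this is not a formal consequence of Hypotheses~\ref{hyp-2} and~\ref{hyp-1} alone and is left implicit in the paper, but it does hold for the concrete $\phi=(\theta,\rho)$ of Section~\ref{sec:log} since $\rho(y)=\log|y|$, which is all that Proposition~\ref{prop-2} ultimately needs.
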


Indeed, if an explosion occurs to \eqref{eq-27} at
time $T$, then an explosion should also occurs to \eqref{eq-rde}
at time $T$.


\subsection{Non-geometric rough paths}

Let us consider  a rough path $\bx$ of finite $p$-variation controlled
by $\omega$ with value in a
 Banach space $U$, but which is not necessarily a geometric one.  In~\cite{lejay-victoir06a} it is shown that $\bx$
may be decomposed by $\bx=\widehat{\bx}+\beta$, 
where $\widehat{\bx}$ is a geometric rough path and 
$\beta$ is a path of $p/2$-finite variation with 
values in $U\otimes U$ such that $\beta_t$ is symmetric.
In addition, the map $\bx\mapsto (\widehat{\bx},\beta)$ is continuous.

Besides, there exists a sequence $(\widehat{\bx}^n,\beta^n)$ of approximations
of $(\widehat{\bx},\beta)$, in the sense that $\widehat{\bx}^n$
converges to $\widehat{\bx}$ for the norm of $p$-variation induced by $\omega$
and $\beta^n$ converges to $\beta$ for the norm of $p/2$-variation
induced by $\omega$.

\begin{hypothesis}
\label{hyp-3}
Let $f$ be a $\Lip(1+\gamma)$-vector field from $V$ to $L(U,V)$, $\gamma\in(0,1]$, $2+\gamma>p$ 
such 
that $f\cdot\nabla f$ defined by 
\begin{equation*}
(f\cdot\nabla f)(v)u\otimes w=\nabla f(v)((f(v) u)\otimes w),
\ (v,u,w)\in V\times U\times U,
\end{equation*}
is $\Lip(\gamma)$-vector field from $V$ to $L(U\otimes U,V)$.
\end{hypothesis}

Let $\by$ be the solution to $\by_t=y_0+\int_0^t f(y_s)\vd\bx_s$.
In this case, since $2+\gamma>p$, 
\begin{equation*}
\alpha_t=\int_0^t (f\cdot\nabla f)(y_s)\vd \beta_s
\end{equation*}
is well defined as a Young integral and is of $p/2$-finite variation
controlled by~$\omega$.

Let us denote by $\int \dd y\otimes\dd x$ the cross-iterated integral between $y$
and~$x$. Since 
$\int \dd y\otimes\dd \beta$ is well defined as a Young integral
and $\int \dd y^n\otimes\dd \beta^n$ converges to $\int \dd y\otimes\dd \beta$
as well as  $\int \dd y^n\otimes\dd x^n$ converges to $\int \dd y\otimes\dd x$, an approximation argument 
shows that one may naturally define a cross-iterated integral $\int \dd y\otimes\dd \widehat{x}$ by the formula
\begin{equation}
\label{eq-23}
\int \dd y\otimes\dd \widehat{x}=\int \dd y\otimes\dd x+\int \dd y\otimes\dd \beta.
\end{equation}
It follows that the rough integral $\int_0^t f(y_s)\vd \widehat{x}_s$
is well defined (since one needs only to get 
the iterated integral between $y$ and $\widehat{x}$)
and using an approximation argument, one gets that 
\begin{equation*}
\by_t=a+\int_0^t f(y_s)\vd \widehat{\bx}_s
+\int_0^t (f\cdot \nabla f)(y_s)\vd \beta_s
+\int_0^t f(y_s)\otimes f(y_s)\vd \beta_s.
\end{equation*}
In particular, in using the cross-iterated integral
$\int \dd y\vd\widehat{x}$ given by \eqref{eq-23}, the projection $y_t$
of the rough path $\by_t$ is given by 
\begin{equation*}
y_t=y_0+\pi_V\left(\int_0^t f(y_s)\vd \widehat{\bx}_s\right)+\int_0^t (f\cdot\nabla f)(y_s)\vd \beta_s,
\end{equation*}
where $\pi_V$ is the projection operator from $T_2(V)$ onto $V$.

Now, let us consider a family of approximations
$(\widehat{\bx}^n,\beta^n)$ of $(\widehat{x},\beta)$,
where~$\widehat{\bx}^n$ is a smooth rough path
and $\beta^n$ is a smooth path.
The solution 
\begin{equation*}
y^n_t=a+\int_0^t f(y^n_s)\vd \widehat{x}^n_s+\int_0^t (f\cdot\nabla f)(y^n_s)\vd \beta^n_s
\end{equation*}
is then the projection onto $V$ of the rough solution to 
$\by^n_t=a+\int_0^t f(y^n_s)\vd \bx^n_s$.

\begin{hypothesis}
\label{hyp-4}
We consider a  one-to-one $\Lip(1+\gamma)$ map $\phi$ from $\Gamma\subseteq V$ to a closed subset $\Omega\subseteq W$ such that 
\begin{equation}
\label{eq-28}
h_1(z)=\nabla \phi\circ \phi^{-1}(z)f\circ \phi^{-1}(z)
\end{equation}
may be extended to a $\Lip(1+\gamma)$-vector field from $W$ to $L(U,W)$  and
\begin{equation}
\label{eq-29}
h_2(z)=\nabla \phi\circ \phi^{-1}(z)[f\cdot \nabla f](\phi^{-1}(z))
\end{equation}
may be extended  to a $\Lip(\gamma)$-vector field from $W$ to $L(U,W)$.
We also assume that any solution $y^n$ satisfies $y^n_t\in\Gamma$, $t\in[0,T]$, 
for any $n$.
\end{hypothesis}

Set 
$z^n_t=\phi(y^n_t)$. It follows from the change of variable formula
that
\begin{equation*}
z^n_t=z^n_0+\int_0^t \nabla \phi(y_s^n)f(y_s^n)\vd\widehat{x}^n_s
+\int_0^t \nabla \phi(y_s^n)(f\cdot\nabla f)(y_s^n)\vd \beta_s^n.
\end{equation*}
and then that 
\begin{equation*}
z^n_t=z^n_0+\int_0^t h_1(z^n_s)\vd\widehat{x}^n_s
+\int_0^t h_2(z^n_s)\vd \beta^n_s.
\end{equation*}

Passing to the limit with the help of Lemma~\ref{lem-3}
(let us recall that the definition of $\int h(z_s)\vd x_s$ as
a rough integral requires only to know the cross-iterated integral
between $z$ and $x$ and well as the iterated integrals of $x$),
one gets the following Lemma. 

\begin{lemma}
Under Hypotheses~\ref{hyp-3} and \ref{hyp-4}, 
$z_t=\phi(y_t)$ is solution to 
\begin{equation}
\label{eq-21}
z_t=z_0+\int_0^t h_1(z_s)\vd\widehat{x}_s
+\int_0^t h_2(z_s)\vd \beta_s
\end{equation}
where the integral $\int h_1(z_s)\vd \widehat{x}_s$
is defined using the cross-iterated integral $\int \dd\phi(y_s)\vd \widehat{x}_s$.
\end{lemma}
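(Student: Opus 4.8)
The plan is to pass to the limit $n\to\infty$ in the identity
\[
z^n_t=z^n_0+\int_0^t h_1(z^n_s)\vd\widehat{x}^n_s+\int_0^t h_2(z^n_s)\vd\beta^n_s ,
\]
which was just obtained for the smooth approximations, using Lemma~\ref{lem-3} to carry $\phi$ over to the level of partial rough paths and the continuity of the rough integral and of the Young integral. Three ingredients enter.

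First I would establish the convergence of the data. Exactly as in the geometric case (the discussion preceding Lemma~\ref{lem-4}), the continuity of the solution map of the RDE $\by^n_t=a+\int_0^t f(y^n_s)\vd\bx^n_s$, combined with the continuity of the decomposition $\bx\mapsto(\widehat{\bx},\beta)$ and with $(\widehat{\bx}^n,\beta^n)\to(\widehat{\bx},\beta)$, gives $y^n\to y$ and the convergence of the partial rough path $(\widehat{x}^n,y^n,\int\dd y^n\otimes\dd\widehat{x}^n)$, in the topology of $p$-variation generated by $\omega$, to $(\widehat{x},y,\int\dd y\otimes\dd\widehat{x})$ (with $\int\dd y\otimes\dd\widehat{x}$ the cross-iterated integral \eqref{eq-23}), together with $\beta^n\to\beta$ in $p/2$-variation. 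Then, applying the continuity statement of Lemma~\ref{lem-3} to $\phi$, I get that $(\widehat{x}^n,\phi(y^n),\int\dd\phi(y^n)\otimes\dd\widehat{x}^n)\to(\widehat{x},\phi(y),\int\dd\phi(y)\otimes\dd\widehat{x})$; in particular $z^n=\phi(y^n)\to z=\phi(y)$ in $p$-variation, hence uniformly, and $(\widehat{x},z,\int\dd z\otimes\dd\widehat{x})$ is a partial rough path.

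Next I would pass to the limit term by term. The left-hand side is immediate: $z^n_t=\phi(y^n_t)\to\phi(y_t)=z_t$ and $z^n_0=\phi(a)=z_0$. For the rough term, $h_1$ is by Hypothesis~\ref{hyp-4} a $\Lip(1+\gamma)$-vector field from $W$ to $L(U,W)$ with $2+\gamma>p$, and the rough integral $\int h_1(z^n_s)\vd\widehat{x}^n_s$ depends continuously only on the partial rough path $(\widehat{x}^n,z^n,\int\dd z^n\otimes\dd\widehat{x}^n)$ and on the iterated integrals of $\widehat{x}^n$; hence it converges to $\int h_1(z_s)\vd\widehat{x}_s$, which is precisely the rough integral built from the cross-iterated integral $\int\dd\phi(y)\otimes\dd\widehat{x}$. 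For the Young term, $h_2$ is a $\Lip(\gamma)$-vector field from $W$ to $L(U,W)$ and, since $2+\gamma>p$, one has $\gamma/p+2/p>1$, so $h_2(z)$ is of finite $p/\gamma$-variation and $\int h_2(z_s)\vd\beta_s$ is a well-defined Young integral; continuity of the Young integral in the integrand (in $p/\gamma$-variation, possibly after infinitesimally lowering the Hölder exponent) and in the integrator (in $p/2$-variation) then gives $\int h_2(z^n_s)\vd\beta^n_s\to\int h_2(z_s)\vd\beta_s$. Combining the three limits produces \eqref{eq-21}.

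The main obstacle is the first ingredient: the convergence $(\widehat{x}^n,y^n,\int\dd y^n\otimes\dd\widehat{x}^n)\to(\widehat{x},y,\int\dd y\otimes\dd\widehat{x})$, that is, the continuity of the It\^o--Lyons map at the level of partial rough paths together with the compatibility of the decomposition $\bx=\widehat{\bx}+\beta$ with the smooth approximation. This is already invoked (implicitly) in the geometric case and is the only genuinely delicate point. A secondary point requiring care is the bookkeeping needed to identify the limit of $\int\dd\phi(y^n)\otimes\dd\widehat{x}^n$ with the cross-iterated integral $\int\dd\phi(y)\otimes\dd\widehat{x}$ that enters the very definition of $\int h_1(z_s)\vd\widehat{x}_s$; everything else reduces to the routine continuity estimates for rough and Young integrals.
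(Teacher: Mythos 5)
Your proposal reproduces the paper's argument: the paper's proof consists precisely of establishing the identity for the smooth approximations $z^n=\phi(y^n)$ and then passing to the limit via Lemma~\ref{lem-3}, the continuity of the rough integral in the partial rough path $(\widehat{x},z,\int\dd z\otimes\dd\widehat{x})$ together with the second-level of $\widehat{\bx}$, and the stability of the Young integral $\int h_2(z_s)\vd\beta_s$ under convergence of $z$ in $p$-variation and $\beta$ in $p/2$-variation. The one point you flag as "genuinely delicate'' --- convergence of the partial rough paths $(\widehat{x}^n,y^n,\int\dd y^n\otimes\dd\widehat{x}^n)$, i.e.\ continuity of the It\^o--Lyons map at the $\Lip(1+\gamma)$ level of regularity where uniqueness is not guaranteed --- is indeed glossed over in the paper as well, so your account is faithful to it.
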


Let us note that \eqref{eq-21} is different from 
\begin{equation*}
z_t=z_0+\int_0^t h_1(z_s)\vd x_s=z_0+\int_0^t h_1(z_s)\vd \widehat{x}_s
+\int_0^t (h_1\nabla h_1)(z_s)\vd \beta_s.
\end{equation*}

Note also that unless $f$ is bounded, $(f\cdot\nabla f)$ 
may be only \emph{locally} $\gamma$-Hölder continuous, 
so that the boundedness of $h_1$ and $h_2$ is not necessarily 
sufficient to deduce the existence of a global solution 
to \eqref{eq-rde} if one drops Hypothesis~\ref{hyp-3}.


%
%
%

\section{A convenient transformation of vector fields}
\label{sec:log}
From now, we denote by $\SSS^d$ the sphere
of radius $1$ in $\RR^d$.

Let us consider the one-to-one map $\phi$ from $\RR^d\backslash\{0\}$ to $\Omega=\SSS^d\times \RR_+$
defined by 
\begin{equation*}
\phi(z)=\begin{bmatrix}
\theta(z)\\
\rho(z)
\end{bmatrix}
\text{ with }\theta(z)=\dfrac{z}{|z|}
\text{ and }\rho(z)=\log(|z|)
\end{equation*}
for $z\in\RR^d\backslash\{0\}$. 

\begin{remark}
In this section, $C$ and $C'$ denote constants that may vary from line to line.
\end{remark}

Let us also set the inverse map
\begin{equation*}
z(\theta,\rho)=\exp(\rho)\theta.
\end{equation*}
Since $|\theta|=|\theta'|=1$, for all $(\theta,\rho)\in\Omega$, $(\theta',\rho')\in\Omega$,
\begin{equation}
\label{eq-5}
|z(\theta,\rho)-z(\theta',\rho')|\leq 
|\theta-\theta'|\exp(\rho')+|\exp(\rho)-\exp(\rho')|.
\end{equation}
If $\rho'>\rho$, we have
\begin{equation*}
\exp(\rho')-\exp(\rho)=\int_0^1 \exp(\rho+\tau(\rho'-\rho))(\rho'-\rho)d\tau
\leq \exp(\rho')(\rho'-\rho)
\end{equation*}
so that we transform \eqref{eq-5} as 
\begin{equation}
\label{eq-6}
|z(\theta,\rho)-z(\theta',\rho')|\leq 
\exp(\rho')\big(|\theta-\theta'|+(\rho'-\rho)\big)\text{ when }
\rho'>\rho.
\end{equation}
We have also for $\gamma\in(0,1)$ and $\rho'\geq \rho$,  
\begin{multline}
\label{eq-7}
|z(\theta,\rho)-z(\theta',\rho')|\\
\leq 
\max\{|z(\theta,\rho)|,|z(\theta',\rho')|\}^{1-\gamma}
2^{\gamma-1}\exp(\gamma\rho')(
|\rho'-\rho|^\gamma+|\theta'-\theta|^\gamma)\\
\leq 2^{\gamma-1}\exp(\rho')(|\rho'-\rho|^\gamma+|\theta'-\theta|^\gamma).
\end{multline}
This way, $z(\theta,\rho)$ is locally $\gamma$-Hölder
for any $\gamma\in(0,1]$.

In addition, if $\theta_i(z)=z_i/|z|$, 
\begin{equation*}
\frac{\partial \theta_i(z)}{\partial z_j}=\delta_{i,j}\frac{1}{|z|}
-\frac{z_j^2}{|z|^3}\text{ and }
\frac{\partial \rho(z)}{\partial z_i}=\frac{z_i}{|z|^2}.
\end{equation*}
Computing the first order derivatives for $k=1,2,3$, one gets that
\begin{equation*}
|\nabla^k \theta(z)|\leq C/|z|^k,\ 
|\nabla^k \rho(z)|\leq C'/|z|^k.
\end{equation*}
Hence
\begin{multline}
\label{eq-12}
|\nabla \phi(z)-\nabla \phi(z)|
\leq C
\int_0^1 |z'-z|\cdot |\nabla^{2}\phi(z+\tau(z'-z))|d\tau\\
\leq C\int_0^1 \frac{|z'-z|}{|z+\tau(z'-z)|^2}d\tau
\leq \dfrac{C}{|z|}-\dfrac{C'}{|z'|}\leq \dfrac{C|z'-z|}{|z'|\cdot|z|}.
\end{multline}
This way, 
\begin{equation}
\label{eq-8}
|\nabla \phi(z(\theta,\rho))-\nabla \phi(z(\theta',\rho'))|
\leq \frac{|\rho'-\rho|}{\exp(\rho)}.
\end{equation}
In addition, if $\rho',\rho\geq 1$, 
\begin{multline}
\label{eq-9}
|\nabla \phi(z(\theta,\rho))-\nabla \phi(z(\theta',\rho'))|\\
\leq \frac{|\rho'-\rho|^\gamma+|\theta'-\theta|^\gamma}{\gamma}\max\{|\nabla\phi(z(\theta,\rho))|^{1-\gamma},
|\nabla\phi(z(\theta',\rho'))|^{1-\gamma}\}\\
\leq C \frac{|\rho'-\rho|^\gamma+|\theta'-\theta|^\gamma}{\exp(\rho)}.
\end{multline}

If $f$ is a $\Lip(\gamma)$ vector field from $\RR^d$ to $L(\RR^m,\RR^d)$, then
set 
\begin{equation*}
h(\theta,\rho)=\nabla \phi(z(\theta,\rho))f(z(\theta,\rho))
\end{equation*}
which is a short-hand for
\begin{equation}
\label{eq-30}
h^i_k(\theta,\rho)=\sum_{j=1}^m \frac{\partial \phi^i(z(\theta,\rho))}{\partial z_j}f^j_k(z(\theta,\rho)).
\end{equation}

\begin{lemma}
\label{lem-1}
For $\gamma\in(0,1]$, $h$ is also a $\Lip(\gamma)$-vector field from $\Omega$ to $L(\RR^m,\RR^d)$
and is bounded.
\end{lemma}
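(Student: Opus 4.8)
The plan is to treat boundedness first, by a direct pointwise estimate, and then to prove the $\gamma$-Hölder bound by splitting pairs of points of $\Omega=\SSS^d\times\RR_+$ into a near-diagonal regime, on which the estimates \eqref{eq-5}--\eqref{eq-12} do all the work, and a complementary regime, on which the inequality is immediate from boundedness.

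For boundedness, recall that $|\nabla\phi(z)|\le C/|z|$, hence $|\nabla\phi(z(\theta,\rho))|\le C\exp(-\rho)$, while the $\Lip(\gamma)$ hypothesis on $f$ gives $|f(z)|\le|f(0)|+C|z|^\gamma$, hence $|f(z(\theta,\rho))|\le|f(0)|+C\exp(\gamma\rho)$. Multiplying, $|h(\theta,\rho)|\le C|f(0)|\exp(-\rho)+C\exp((\gamma-1)\rho)$, and since $\rho\ge 0$ and $\gamma\le 1$ both exponentials are at most $1$; so $h$ is bounded by a constant depending only on $|f(0)|$, the Hölder seminorm of $f$, $\gamma$ and $d$. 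This is also the mechanism behind the whole lemma: $\nabla\phi$ decays like $1/|z|$ exactly fast enough to absorb the at most $|z|^\gamma$ growth of $f$, \emph{provided} one stays in $\{|z|\ge 1\}$, i.e. $\rho\ge 0$.

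For the Hölder bound, fix $(\theta,\rho),(\theta',\rho')\in\Omega$, assume without loss of generality $\rho\le\rho'$, and write $z=z(\theta,\rho)$, $z'=z(\theta',\rho')$. If $|\rho-\rho'|\ge 1$, then $|h(\theta,\rho)-h(\theta',\rho')|\le 2\|h\|_\infty\le 2\|h\|_\infty(|\theta-\theta'|^\gamma+|\rho-\rho'|^\gamma)$ and there is nothing to prove, so I may assume $|\rho-\rho'|\le 1$, whence $|z|$ and $|z'|$ are comparable. I would then split
\[
h(\theta,\rho)-h(\theta',\rho')=\bigl(\nabla\phi(z)-\nabla\phi(z')\bigr)f(z)+\nabla\phi(z')\bigl(f(z)-f(z')\bigr).
\]
If moreover $\rho,\rho'\ge 1$, I would bound the first term with \eqref{eq-9}, namely $|\nabla\phi(z)-\nabla\phi(z')|\le C\exp(-\rho)(|\rho-\rho'|^\gamma+|\theta-\theta'|^\gamma)$, and multiply by $|f(z)|\le|f(0)|+C\exp(\gamma\rho)$, the resulting prefactors $\exp(-\rho)$ and $\exp((\gamma-1)\rho)$ being $\le 1$. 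For the second term I would use $|\nabla\phi(z')|\le C\exp(-\rho')$, $|f(z)-f(z')|\le C|z-z'|^\gamma$, and $|z-z'|\le\exp(\rho')(|\theta-\theta'|+|\rho-\rho'|)$ from \eqref{eq-6}, so that $\exp(-\rho')|z-z'|^\gamma\le\exp((\gamma-1)\rho')(|\theta-\theta'|+|\rho-\rho'|)^\gamma\le C(|\theta-\theta'|^\gamma+|\rho-\rho'|^\gamma)$, using $\rho'\ge 0$, $\gamma\le 1$ and subadditivity of $t\mapsto t^\gamma$. The only remaining sub-case, $\min(\rho,\rho')<1$ together with $|\rho-\rho'|\le 1$, confines $z$ and $z'$ to the compact annulus $\{1\le|w|\le e^2\}$; there $\phi$ is smooth, so $\nabla\phi$ is bounded and $\Lip(\gamma)$, $f$ is bounded and $\Lip(\gamma)$, hence so is their product $h$, and one converts $|z-z'|$ back to $|\theta-\theta'|+|\rho-\rho'|$ via \eqref{eq-5}. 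Collecting the cases gives $H_\gamma(h)$ controlled by $|f(0)|$, the Hölder seminorm of $f$, $\gamma$ and $d$.

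The only real obstacle is the exponential bookkeeping just described: one has to arrange, through the near/far case split, to be in a position to apply \eqref{eq-5}--\eqref{eq-12} (which require $\rho\le\rho'$, or $\rho,\rho'\ge 1$, or proximity to a compact region), and then to check that every stray factor $\exp(-\rho)$, $\exp((\gamma-1)\rho)$ or $\exp((\gamma-1)\rho')$ produced along the way is $\le 1$ because $\rho,\rho'\ge 0$. No genuinely new estimate beyond those already established in Section~\ref{sec:log} is needed.
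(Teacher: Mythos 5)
Your proof is correct and takes essentially the same route as the paper: split $h(\theta,\rho)-h(\theta',\rho')$ by the product rule, control $\nabla\phi$ via the decay estimates \eqref{eq-9}, \eqref{eq-12} and $f$ via its Hölder/growth bound, and use $\rho,\rho'\ge 0$ so that the stray factors $\exp(-\rho)$, $\exp((\gamma-1)\rho)$ are $\le 1$. The only difference is that you add the near/far case split (handling $|\rho-\rho'|\ge 1$ and the compact annulus $0\le\min(\rho,\rho')<1$), which the paper leaves implicit by working directly under the hypothesis $\rho,\rho'\ge 1$ used in \eqref{eq-9}; this makes your version slightly more complete, but it is the same argument.
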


\begin{proof}
For $\rho,\rho'\geq 1$, using \eqref{eq-9}, 
\begin{multline*}
|h(\theta,\rho)-h(\theta',\rho')|
\leq C|z(\theta,\rho)-z(\theta',\rho')|^\gamma|\nabla \phi(z(\theta',\rho'))|\\
+|\nabla \phi(z(\theta,\rho))-\nabla \phi(\theta',\rho')|\cdot
|f(z(\theta,\rho))|\\
\leq C(|\theta-\theta'|^\gamma+|\rho-\rho'|^\gamma)
+C'|\rho'-\rho|^\gamma.
\end{multline*}
This proves that $h$ is a $\Lip(\gamma)$-vector field from $\Omega$ to $L(\RR^m,\RR^d)$.
In addition, 
since $|h(z)|\leq A+K|z|$ and $\nabla\phi(z)\leq C/|z|$, 
we get that $h$ is bounded.
\end{proof}

\begin{lemma}
\label{lem-2}
If $f$ is a $\Lip(1+\gamma)$-vector field from $\RR^d$ to $L(\RR^m,\RR^d)$ with $\gamma\in(0,1]$,
then~$h$ is a bounded $\Lip(1+\gamma)$-vector field from $\RR^m$ to $L(\RR^m,\RR^d)$.
\end{lemma}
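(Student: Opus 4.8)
The plan is to build on Lemma~\ref{lem-1}, which already gives that $h$ is bounded and $\Lip(\gamma)$, and to upgrade this to $\Lip(1+\gamma)$ by exhibiting a bounded $\Lip(\gamma)$ candidate for $\nabla h$. Writing $h(\theta,\rho)=\nabla\phi(z(\theta,\rho))\,f(z(\theta,\rho))$ with $z(\theta,\rho)=\exp(\rho)\theta$, the natural candidate is obtained by the chain rule: $\nabla h = \big(\nabla^2\phi(z)\,\nabla_{(\theta,\rho)} z\big)\,f(z) + \nabla\phi(z)\,\big(\nabla f(z)\,\nabla_{(\theta,\rho)} z\big)$, where $\nabla_{(\theta,\rho)}z$ is the Jacobian of the inverse map. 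First I would record the two elementary facts that make everything work: $|\nabla^k\phi(z)|\le C/|z|^k$ for $k=1,2$ (already established in \eqref{eq-12} and the lines preceding it), and $|\nabla_{(\theta,\rho)}z|\le C\exp(\rho)=C|z|$ together with $|\nabla^2_{(\theta,\rho)}z|\le C|z|$ (a direct computation from $z=\exp(\rho)\theta$). Combined with the linear growth $|f(z)|\le A+K|z|$ and boundedness of $\nabla f$, each term in the chain-rule expression for $\nabla h$ carries a factor $|z|^{-2}\cdot|z|\cdot|z| = O(1)$ or $|z|^{-1}\cdot 1\cdot|z| = O(1)$, so $\nabla h$ is bounded on $\Omega$.

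Next I would verify the defining inequality \eqref{eq-26} for $h$, i.e.\ $|h(u)-h(u')-\nabla h(u')(u-u')|\le C|u-u'|^{1+\gamma}$ for $u=(\theta,\rho)$, $u'=(\theta',\rho')$ in $\Omega$, and the $\gamma$-Hölder continuity of $\nabla h$. Both follow by composition: $h = g\circ z$ where $g(\zeta)=\nabla\phi(\zeta)f(\zeta)$ is $\Lip(1+\gamma)$ on $\RR^d\setminus\{0\}$ away from the origin (since $\nabla\phi$ is $\Lip(1+\gamma)$ there by the bounds $|\nabla^k\phi|\le C/|z|^k$, $k=1,2,3$, and $f$ is $\Lip(1+\gamma)$ by hypothesis), and $z$ is smooth with the Jacobian bounds above. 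The only subtlety is that $g$ is \emph{not} globally $\Lip(1+\gamma)$ on $\RR^d$ — its constants blow up near $0$ — but the image of $z$ stays in the region $|z|=\exp(\rho)$, and on the relevant range (using the weighted Hölder estimates \eqref{eq-7}, \eqref{eq-8}, \eqref{eq-9} which already carry the correct powers of $\exp(\rho)$) these blow-ups are exactly compensated by the $\exp(\rho)$ factors coming from $\nabla z$ and from $f$'s linear growth. So I would split into the regions $\rho,\rho'\ge 1$ (where \eqref{eq-9} applies and gives the clean weighted estimate) and a bounded remainder, handling the latter by the fact that $\Omega$ is closed and $g$ is smooth on any compact subset of $\{|z|\ge e^{-1}\}$... wait, $\rho$ ranges over $\RR_+$ so $|z|\ge 1$ throughout $\Omega$, which actually \emph{removes} the small-$|z|$ difficulty entirely. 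Then I would record that the Whitney extension theorem (Theorem~\ref{thm-1}) extends $h$ from the closed set $\Omega$ to a $\Lip(1+\gamma)$ vector field on all of $\RR\times\RR$ — though here, since $\Omega=\SSS^d\times\RR_+$ already has nonempty interior in the relevant directions, one can also extend directly by reflecting $\rho$, which keeps the formula explicit.

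The main obstacle I anticipate is bookkeeping rather than conceptual: correctly tracking that every occurrence of a negative power $|z|^{-k}$ produced by derivatives of $\phi$ is matched by a compensating factor of $\exp(\rho)=|z|$ produced either by $\nabla_{(\theta,\rho)}z$ or by the linear growth of $f$, uniformly as $\rho\to\infty$, including in the $\gamma$-Hölder difference estimates where one must also pair up the weighted estimates \eqref{eq-7}--\eqref{eq-9} with the max-of-norms trick $|a-b|\le\max\{|a|,|b|\}^{1-\gamma}(\cdots)^\gamma$. A secondary point to state carefully is that the identity $\nabla(h)(\theta,\rho) = \nabla\big(\nabla\phi(z)f(z)\big)\big|_{z=z(\theta,\rho)}\cdot\nabla z(\theta,\rho)$ — which is the Newton/chain rule for $\Lip(1+\gamma)$ maps — is legitimate, i.e.\ that composition of a $\Lip(1+\gamma)$ map (on the range of $z$) with a smooth map yields a $\Lip(1+\gamma)$ map with the expected derivative; this is standard but worth one line, as it is the same mechanism used implicitly in Lemma~\ref{lem-3}.
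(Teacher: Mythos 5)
Your plan is essentially the paper's own argument: compute $\nabla h = \nabla^2\phi(z)\,\nabla z\,f(z) + \nabla\phi(z)\,\nabla f(z)\,\nabla z$ by the chain rule, observe that the negative powers $|z|^{-k}$ from $\nabla^k\phi$ are matched by positive powers of $|z|=e^\rho$ coming from $\nabla z$ and from $f$'s linear growth, and in this way obtain boundedness and $\gamma$-H\"older continuity of $\nabla h$ on $\Omega$; the Whitney extension step is then the paper's Corollary~\ref{cor-1}. Two minor corrections: first, the side remark that one could ``extend directly by reflecting $\rho$'' does not work, since $\Omega=\SSS^d\times\RR_+$ is a $d$-dimensional cylinder over a hypersurface and so has empty interior in $\RR^{d+1}$ --- one must extend in the directions normal to $\SSS^d$ as well, which is precisely why Whitney is invoked; second, the genuine subtlety is not that $g=\nabla\phi\cdot f$ has constants blowing up near $0$ (irrelevant since $|z|\geq 1$ on $\Omega$) but that $\nabla z$ is \emph{unbounded} as $\rho\to\infty$, so it is not enough that $g$ be uniformly $\Lip(1+\gamma)$ on $\{|z|\geq 1\}$ --- one must use the \emph{weighted} H\"older estimates \eqref{eq-8}--\eqref{eq-9}, where the H\"older constant of $\nabla\phi$ decays like $e^{-\rho}$, exactly cancelling the growth of $\nabla z$; your final paragraph does in fact identify this compensation correctly.
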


\begin{proof}
Clearly, $h$ is differentiable and we have seen that it is bounded.
We have
\begin{equation*}
\nabla h(\theta,\rho)=\nabla^2\phi(z(\theta,\rho))\nabla z(\theta,\rho)
f(\theta,\rho)+\nabla \phi(z(\theta,\rho))\nabla f(\theta,\rho).
\end{equation*}
It follows from Lemma~\ref{lem-1} that 
$\nabla \phi(z(\theta,\rho))\nabla f(\theta,\rho)$ is 
$\gamma$-Hölder continuous on~$\Omega$ and bounded.
For $\rho,\rho'\geq 1$, 
\begin{multline*}
|\nabla h(\theta,\rho)-\nabla h(\theta,\rho)|
\leq C|z(\theta,\rho)-z(\theta',\rho')|^{\gamma}|\nabla \phi(z(\theta',\rho'))|\\
+|\nabla \phi(z(\theta,\rho))-\nabla \phi(\theta',\rho')|\cdot
|\nabla f(z(\theta,\rho))|
+|\nabla^2\phi(z(\theta,\rho))\nabla z(\theta,\rho)f(z(\theta,\rho))\\
-\nabla^2\phi(z(\theta,\rho))\nabla z(\theta,\rho)f(z(\theta,\rho))|.
\end{multline*}
For $z_i(\theta,\rho)=\theta\exp(\rho)$,
\begin{equation*}
\frac{\partial z_i}{\partial \theta_j}=\delta_{i,j}\exp(\rho)
\text{ and }\frac{\partial z_i}{\partial \rho}=\theta_i\exp(\rho)=z_i(\theta,\rho).
\end{equation*}
Thus, $\nabla z(\theta,\rho)$ also satisfies \eqref{eq-8} and \eqref{eq-9}
and then
\begin{equation*}
|\nabla z(\theta,\rho)-\nabla z(\theta',\rho')|
\cdot |f(z(\theta',\rho'))\nabla^2\phi(z(\theta',\rho'))|
\leq C|\rho'-\rho|^{\gamma}
+C'|\theta'-\theta|^{\gamma}.
\end{equation*}
Since $f$ is a Lipschitz function, if $\rho'\geq \rho\geq 1$,
\begin{multline*}
|(h(z(\theta,\rho))
-h(z(\theta',\rho')))|\cdot
|\nabla^2\phi(z(\theta',\rho'))\nabla z(\theta,\rho)|\\
\leq K|z(\theta,\rho)-z(\theta',\rho')|\frac{C\exp(\rho)}{\exp(2\rho')}
\leq C|\theta-\theta'|^{\gamma}+C'|\rho-\rho'|^{\gamma}.
\end{multline*}
Finally, for some positive constants $A$ and $B$, 
\begin{multline}
\label{eq-13}
|\nabla^2\phi(z(\theta',\rho'))-\nabla^2\phi(z(\theta,\rho))|
|f(z(\theta,\rho))\nabla z(\theta,\rho)|\\
\leq (A+B\exp(\rho))\exp(\rho)|\nabla^2\phi(z(\theta',\rho'))-\nabla^2\phi(z(\theta,\rho))|.
\end{multline}
If $|z'|\geq |z|$, 
\begin{multline*}
|\nabla^2\phi(z)-\nabla^2\phi(z')|
\leq C\left|\int_0^1 \frac{z'-z}{(z+\tau(z'-z))^2}d\tau
\right|\\
\leq \frac{C}{z^2}-\frac{C'}{(z')^2}
\leq |z'-z|\frac{|z|+|z'|}{|z|^2|z'|^2}
\leq \frac{2|z'-z|}{|z|^2}.
\end{multline*}
Then 
\begin{multline}
|\nabla^2\phi(z(\theta',\rho'))-\nabla^2\phi(z(\theta,\rho))|
|f(z(\theta,\rho))\nabla z(\theta,\rho)|\\
\leq 
C\frac{(A+B\exp(\rho))\exp(\rho)}{\exp(2\rho)}
(|\theta-\theta'|^{\gamma}+|\rho-\rho'|^{\gamma})
\end{multline}
which implies that the difference is $\gamma$-Hölder
when $\rho'\geq \rho\geq 1$.

Summarizing all these inequalities, we get that 
$\nabla h$ is $\gamma$-Hölder on $\Omega$.
This proves that $h$ is a $\Lip(1+\gamma)$-vector field from $\Omega$ to $L(\RR^d,\RR^m)$-vector field.
In addition, $\nabla h$ and $h$ are clearly bounded.
\end{proof}

To conclude, let us recall an important result from H.~Whitney, 
which is of course also valid for more regular vector fields.

\begin{theorem}[Whitney extension theorem~\cites{whitney,stein70a}]
\label{thm-1}
Let $\Omega$ be a closed subset of~$\RR^d$ and $\gamma\in(0,1]$.

There exists a linear operator $\fE$
for the space of bounded 
$\Lip(\gamma)$-vector fields from $\Omega$ to a space $\RR^{d'}$
to bounded $\Lip(\gamma)$-vector fields from $\RR^d$ to $\RR^{d'}$ such that 
\begin{align*}
\fE(f)_{|\Omega}&=f\\
\|\fE(f)\|_{\Lip}&\leq c\|f\|_{\Lip}\text{ with }
\|f\|_{\Lip}=\max\{\|f\|_{\infty}, H_\gamma(f)\},
\end{align*}
where $c$ depends only on $\gamma$.

There exists a linear operator~$\fE$ from the space of 
bounded $\Lip(1+\gamma)$-vector fields from $\Omega$ to $\RR^{d'}$ to bounded
$\Lip(1+\gamma)$-vector field from $\RR^d$ to $\RR^{d'}$ such that 
\begin{align*}
\fE(f)_{|\Omega}&=f\text{ and }\nabla \fE(f)_{|\Omega}=\nabla f,\\
\|\fE(f)\|_{\Lip}&\leq c\|f\|_{\Lip}
\text{ with }\|f\|_{\Lip}=\max\{\|f\|_{\infty}, \|\nabla f\|_{\infty}, H_\gamma(\nabla f)\},
\end{align*}
where $c$ depends only on $\gamma$.
\end{theorem}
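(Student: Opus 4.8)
The plan is to reproduce Whitney's classical construction, based on a dyadic decomposition of the open complement of $\Omega$. First I fix a \emph{Whitney decomposition} of $\RR^d\setminus\Omega$: a countable family $\{Q_j\}$ of closed dyadic cubes with pairwise disjoint interiors whose union is $\RR^d\setminus\Omega$ and which satisfy $\operatorname{diam}Q_j\le\operatorname{dist}(Q_j,\Omega)\le 4\operatorname{diam}Q_j$, together with the finite-overlap property that a fixed dilation $Q_j^\ast$ of $Q_j$ (say by the factor $9/8$) meets at most $N=N(d)$ of the others, and the fact that two cubes whose dilations meet have comparable sidelengths. Subordinate to $\{Q_j^\ast\}$ one constructs a smooth partition of unity $\{\varphi_j\}$ with $\sum_j\varphi_j\equiv 1$ on $\RR^d\setminus\Omega$, $\operatorname{supp}\varphi_j\subset Q_j^\ast$ and $|\nabla^k\varphi_j|\le C_k(\operatorname{diam}Q_j)^{-k}$ for $k=0,1,2$, and for each $j$ one picks a point $p_j\in\Omega$ realizing $\operatorname{dist}(Q_j,\Omega)$. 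These are purely geometric facts about closed subsets of $\RR^d$, with all constants depending only on $d$.

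The extension is then $g:=f$ on $\Omega$ and, off $\Omega$, $g(x):=\sum_j\varphi_j(x)P_j(x)$, where $P_j$ is the first-order Taylor polynomial $P_j(x)=f(p_j)+\nabla f(p_j)(x-p_j)$ of $f$ at $p_j$ in the $\Lip(1+\gamma)$ case, and $P_j\equiv f(p_j)$ in the $\Lip(\gamma)$ case. Linearity of $f\mapsto g$ is immediate, the cubes, points $p_j$ and cutoffs $\varphi_j$ being independent of $f$. To turn this into a genuinely \emph{bounded} extension I set $\fE(f):=\chi\,g$, where $\chi$ is a fixed smooth cutoff equal to $1$ on $\Omega$, vanishing outside the unit neighbourhood $\{\operatorname{dist}(\cdot,\Omega)<1\}$ and with $|\nabla^k\chi|\le C_k$ — such a $\chi$ is obtained by composing a fixed bump with a smooth \emph{regularized distance} comparable to $\operatorname{dist}(\cdot,\Omega)$ (itself produced from the Whitney decomposition). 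Multiplying by $\chi$ changes neither the values nor the gradient on $\Omega$ since there $\chi\equiv 1$ and $\nabla\chi\equiv0$, and on $\{\operatorname{dist}(\cdot,\Omega)<1\}$ the function $g$ is already bounded by $c\|f\|_{\Lip}$: each $P_j$ contributing at a point $x$ satisfies $|x-p_j|\le C\operatorname{dist}(x,\Omega)\le C$, so $|P_j(x)|\le\|f\|_\infty+C\|\nabla f\|_\infty$ and $g(x)$ is a convex combination of such values.

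All the analytic work is in showing that $g$ (hence $\fE(f)$) lies in $\Lip(1+\gamma)$ with $\|\nabla g\|_\infty+H_\gamma(\nabla g)\le c\|f\|_{\Lip}$ and that $g=f$, $\nabla g=\nabla f$ on $\Omega$; I expect this bookkeeping to be the main obstacle. One argues by cases on a pair $x,x'$. When both are in $\RR^d\setminus\Omega$ the function $g$ is smooth; fixing a cube $Q_{j_0}\ni x$ and using $\sum_j\nabla\varphi_j\equiv0$ one writes
\[
\nabla g(x)=\sum_j\nabla\varphi_j(x)\bigl(P_j(x)-P_{j_0}(x)\bigr)+\sum_j\varphi_j(x)\nabla f(p_j),
\]
where only $O(1)$ terms are present and the corresponding $p_j$ all lie within $C\operatorname{dist}(x,\Omega)$ of $p_{j_0}$; the $\Lip(1+\gamma)$ inequality~\eqref{eq-26} for $f$ gives $|P_j(x)-P_{j_0}(x)|\le C H_\gamma(\nabla f)\operatorname{dist}(x,\Omega)^{1+\gamma}$, which combined with $|\nabla\varphi_j(x)|\le C\operatorname{dist}(x,\Omega)^{-1}$ yields $|\nabla g(x)-\nabla f(p_{j_0})|\le C H_\gamma(\nabla f)\operatorname{dist}(x,\Omega)^\gamma$, and an analogous estimate for $\nabla^2 g$ followed by integration along $[x,x']$ (splitting according to whether $|x-x'|$ is small or large compared with $\operatorname{dist}(x,\Omega)$, and reducing to the previous or to the mixed regime accordingly) gives the Hölder bound. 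When both points are in $\Omega$ the estimate is the hypothesis on $f$. The delicate regime is the mixed one, $x\notin\Omega$ and $x'=p\in\Omega$: here every $p_j$ occurring in the sum satisfies $|p_j-p|\le C\operatorname{dist}(x,\Omega)\le C|x-p|$, so that
\[
g(x)-f(p)-\nabla f(p)(x-p)=\sum_j\varphi_j(x)\bigl(P_j(x)-f(p)-\nabla f(p)(x-p)\bigr),
\]
and each summand is $\le C H_\gamma(\nabla f)|x-p|^{1+\gamma}$ by two applications of~\eqref{eq-26}; letting $x\to p$ yields $g=f$ and $\nabla g=\nabla f$ on $\Omega$ and continuity of $\nabla g$ up to $\Omega$ with the right modulus. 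The $\Lip(\gamma)$ statement is the same argument with one derivative removed, and in both cases the final constant $c$ depends only on $d$ and $\gamma$ once the three geometric lemmas on the Whitney cubes (finite overlap of the $Q_j^\ast$, comparability of sidelengths of adjacent cubes, and $|p_j-p_k|\le C\operatorname{diam}Q_j$ when $Q_j^\ast\cap Q_k^\ast\ne\emptyset$) are in hand.
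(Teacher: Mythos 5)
The paper states this theorem with a citation to Whitney and to Stein's book and gives no proof of its own, so there is no in-paper argument to compare your write-up against; your sketch reproduces, in substance, the classical construction in the cited Stein reference (Ch.~VI, Thms.~3--4): Whitney decomposition of $\RR^d\setminus\Omega$, a subordinate partition of unity with derivatives controlled by the cube sidelength, projection points $p_j\in\Omega$, the extension $\sum_j\varphi_j P_j$ with $P_j$ the Taylor polynomial of the relevant order, and the standard three-case estimate (both points off $\Omega$, both on $\Omega$, mixed), and all of that is correct as far as it goes. The one genuine addition to the textbook argument is the cutoff $\chi$ built from a regularized distance, which you rightly identify as necessary in the $\Lip(1+\gamma)$ case to make $\fE(f)$ \emph{bounded}: the raw sum grows like $\operatorname{dist}(x,\Omega)$, and your observations that $\chi\equiv 1$ with $\nabla\chi\equiv 0$ on $\Omega$ (so the trace conditions survive) and that composing a bump with the regularized distance yields bounded derivatives are both correct (in the $\Lip(\gamma)$ case the cutoff is actually unnecessary, since there $g$ is a convex combination of values $f(p_j)$ and is already bounded by $\|f\|_\infty$). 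One caveat worth flagging, which concerns the paper's statement as much as your proof: the bound $|P_j(x)-P_{j_0}(x)|\le C H_\gamma(\nabla f)\operatorname{dist}(x,\Omega)^{1+\gamma}$, and likewise the mixed-case estimate, also use the $\gamma$-H\"older seminorm of $\nabla f$, which the Taylor remainder bound~\eqref{eq-26} alone does \emph{not} control on a general closed $\Omega$ --- take $\Omega$ a hyperplane in $\RR^d$ and $\nabla f$ varying arbitrarily in the normal direction, so that~\eqref{eq-26} holds with $C=0$ while $\nabla f$ has no H\"older regularity. Definition~\ref{def-1} does require $\nabla f$ to be $\Lip(\gamma)$, so the quantity is finite and nothing is false, but $\|f\|_{\Lip}$ as defined omits that seminorm, and both the paper's bound $c\|f\|_{\Lip}$ and yours should be read as implicitly including it.
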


This way, it is possible to extend $h$ to $\RR^d$. 

\begin{corollary}
\label{cor-1}
For a $f\in \Lip(1+\gamma)$-vector field from $\RR^d$ to $L(\RR^m,\RR^d)$,
the bounded $\Lip(1+\gamma)$-vector field $h$ from  $\Omega$ to $L(\RR^m,\RR^d)$ field $h$ defined by \eqref{eq-30} may be extended
to a bounded $\Lip(1+\gamma)$-vector field from $\RR^d$ to $L(\RR^d,\RR^m)$.
\end{corollary}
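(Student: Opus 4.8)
The plan is to deduce the corollary directly from the Whitney extension theorem, the substantive work having already been done in Lemma~\ref{lem-2}. That lemma asserts precisely that the vector field $h$ defined by \eqref{eq-30} is a \emph{bounded} $\Lip(1+\gamma)$-vector field on $\Omega$, so all that remains is to check the hypotheses of Theorem~\ref{thm-1} and apply it.

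First I would record that $\Omega=\SSS^d\times\RR_+$ is a closed subset of the Euclidean space $\RR^{d}\times\RR=\RR^{d+1}$ in the coordinates $(\theta,\rho)$: the unit sphere is closed and $\RR_+=[0,\infty)$ is closed, hence so is their product. (The ambient dimension here is $d+1$ rather than the $d$ appearing in the statement.) Likewise I would fix a linear identification of the target $L(\RR^m,\RR^d)$ with $\RR^{md}$, so that $h$ becomes a bounded $\Lip(1+\gamma)$ map from the closed set $\Omega\subset\RR^{d+1}$ into $\RR^{md}$.

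Then I would invoke the $\Lip(1+\gamma)$ part of Theorem~\ref{thm-1} with $\RR^{d'}=\RR^{md}$: it produces a bounded $\Lip(1+\gamma)$-vector field $\fE(h)\colon\RR^{d+1}\to\RR^{md}\simeq L(\RR^m,\RR^d)$ with $\fE(h)_{|\Omega}=h$, $\nabla\fE(h)_{|\Omega}=\nabla h$ and $\|\fE(h)\|_{\Lip}\le c\|h\|_{\Lip}$, where $c$ depends only on $\gamma$. This $\fE(h)$ is the desired global extension, which finishes the proof.

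The only point I would take care over — and the one place where the argument is not purely formal — is to make sure Lemma~\ref{lem-2} really delivers the $\Lip(1+\gamma)$ estimate on the whole of $\Omega$ and not merely on the region $\{\rho\ge 1\}$ where its computations are carried out. On the complementary strip $\{0\le\rho\le1\}$ the map $z(\theta,\rho)=e^{\rho}\theta$ takes values in the compact annulus $\{1\le|z|\le e\}$, which is bounded away from the origin; there $\phi$ is smooth and $f$ is $\Lip(1+\gamma)$, so $h=\nabla\phi(z)f(z)$ is $\Lip(1+\gamma)$ and bounded by composition, with constants controlled by compactness. Patching this with the $\{\rho\ge1\}$ bounds, which agree on the common boundary $\{\rho=1\}$, yields $h\in\Lip(1+\gamma)(\Omega;\RR^{md})$ with a uniform Lipschitz norm, after which the Whitney extension applies verbatim.
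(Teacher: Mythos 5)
Your proof is correct and matches the paper's (which offers no formal proof beyond the remark ``This way, it is possible to extend $h$ to $\RR^d$''): one simply applies the Whitney extension theorem~\ref{thm-1} to the conclusion of Lemma~\ref{lem-2}. Your two side-observations are well taken and sharpen what the paper leaves implicit --- the ambient space containing $\Omega$ is $\RR^{d+1}$ rather than $\RR^{d}$, and the estimates in Lemma~\ref{lem-2} are carried out only for $\rho,\rho'\ge 1$, a gap you patch by compactness on the strip $\{0\le\rho\le 1\}$; the paper instead sidesteps this by effectively taking $\Omega=\SSS^d\times[1,\infty)$ in the application (Proposition~\ref{prop-2}), which is equally valid.
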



\section{Global existence of a vector field with linear growth}
\label{sec:global}
\subsection{Case of a geometric rough path}

Let us now consider 
\begin{equation}
\label{eq-20}
\by_t=a+\int_0^t f(y_s)\vd \bx_s
\end{equation}
for a geometric rough path $\bx$ with values in $T(\RR^m)$ of finite $p$-variation 
controlled by $\omega$, and a $\Lip(1+\gamma)$ from $\RR^d$ to $L(\RR^m,\RR^d)$,
$2+\gamma>p$.

\begin{proposition}
\label{prop-2}
The solutions of \eqref{eq-20} are defined up to any time $T$ and there exists constants $C$
and $C'$ depending only on $|y_0|$, $\|f\|_{\infty}$, $\|\nabla f\|_{\infty}$, $H_\gamma(f)$, $\gamma$ 
and $p$ such that any solution $y$
to \eqref{eq-20} satisfies $|y_t|\leq C\exp(C'\|x\|_{p,\omega}^p\omega(0,T))$, 
\end{proposition}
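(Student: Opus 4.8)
The strategy is to use the logarithmic change of variable $\phi$ from Section~\ref{sec:log} to reduce the linear-growth RDE \eqref{eq-20} to an RDE with bounded coefficients, and then apply the a~priori bound of Proposition~\ref{prop-3}. Concretely, I would first dispose of the region near the origin: since $\phi$ is only defined on $\RR^d\setminus\{0\}$, I would fix a radius, say work with $|z|\ge 1$ versus $|z|\le 1$, and note that as long as the solution stays in the unit ball there is nothing to prove (it is trivially bounded there), so the only issue is controlling excursions into $\{|z|\ge 1\}$, where $\phi$ is smooth with the estimates \eqref{eq-8}--\eqref{eq-13}.

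\textbf{Key steps.} First, by Lemma~\ref{lem-2} and Corollary~\ref{cor-1}, the transformed vector field $h=\nabla\phi(z(\theta,\rho))f(z(\theta,\rho))$ extends from $\Omega=\SSS^d\times\RR_+$ to a \emph{bounded} $\Lip(1+\gamma)$-vector field on all of $\RR^d$, with $\|h\|_{\Lip}$ controlled by $\|f\|_\infty$, $\|\nabla f\|_\infty$, $H_\gamma(\nabla f)$, $\gamma$. Second, since $\bx$ is a geometric rough path, Lemma~\ref{lem-4} (the change-of-variable formula) applies: as long as $y$ lives up to time $T$ and stays in $\Gamma=\RR^d\setminus\{0\}$, the path $z=\phi(y)=(\theta(y),\rho(y))$ solves the RDE $z_t=\phi(a)+\int_0^t h(z_s)\vd\widehat x_s$ driven by $\bx$. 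Third, Proposition~\ref{prop-3} then gives
\[
\sup_{t\in[0,T]}|\rho(y_t)-\rho(y_0)| \le \sup_{t\in[0,T]}|z_t-z_0| \le C(1+\|\bx\|_{p,\omega}^p\,\omega(0,T)),
\]
with $C$ depending only on $\|h\|_{\Lip}$, $\gamma$, $p$. Since $\rho(y_t)=\log|y_t|$, exponentiating yields $|y_t|\le |y_0|\exp\bigl(C(1+\|\bx\|_{p,\omega}^p\omega(0,T))\bigr)$, which is exactly the claimed bound after absorbing the additive constant into $C$ and $C'$. Finally, to handle the passage through the origin: one works on the maximal interval on which $y$ avoids $0$; if $y$ hits $0$ the solution is bounded near that time anyway, and in all cases the bound $|y_t|\le C\exp(C'\|x\|_{p,\omega}^p\omega(0,T))$ holds with a possibly enlarged $C$ covering $|y_0|$ and the unit-ball contribution. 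This uniform bound on $|y_t|$ is incompatible with the explosion criterion $\lim_{t\to T-}|\by_t|=+\infty$, so no explosion occurs and the solution is global.

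\textbf{Main obstacle.} The delicate point is the interplay between the local-in-space nature of the change of variable and the a~priori bound. Lemma~\ref{lem-4} requires that the solution $y$ stays in $\Gamma$ (here $\RR^d\setminus\{0\}$, on which $h$ is only defined before extension), and Lemma~\ref{lem-2}/Corollary~\ref{cor-1} establish the $\Lip$ bounds for $\nabla h$ only on the region $\rho\ge 1$, i.e. $|z|\ge e$; one must check that the Whitney extension supplied by Corollary~\ref{cor-1} really does give a globally bounded $\Lip(1+\gamma)$ field whose norm does \emph{not} depend on the solution, so that Proposition~\ref{prop-3}'s constant is genuinely uniform. Once one grants that $h$ is bounded $\Lip(1+\gamma)$ on $\RR^d$ with norm controlled by the data of $f$, the rest is a clean application of Proposition~\ref{prop-3} composed with $\rho(y)=\log|y|$; the only bookkeeping is to combine the trivial estimate on $\{|y|\le 1\}$ with the logarithmic estimate on $\{|y|\ge 1\}$ and to verify that the constants can be chosen to depend only on the quantities listed in the statement.
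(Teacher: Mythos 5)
Your overall strategy matches the paper's exactly: compose the solution with the logarithmic map $\phi$ of Section~\ref{sec:log}, observe that by Lemma~\ref{lem-2} and Corollary~\ref{cor-1} the transformed vector field $h$ extends to a bounded $\Lip(1+\gamma)$ field with norm controlled by the data of $f$, invoke the change of variable formula (Lemma~\ref{lem-4}), apply Proposition~\ref{prop-3} to bound $|z_t-z_0|$, and then exponentiate $\rho=\log|\cdot|$. Where you and the paper part ways is precisely at the step you flag as the ``main obstacle.'' The paper does \emph{not} use $\phi$ directly: it sets $\psi(y)=\phi(b+y)$ for a translation vector $b$ chosen so that $|b+y_t|$ stays bounded away from $0$ on $[0,T]$. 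The whole point of introducing $b$ is that $\psi$ then maps the trajectory into a single chart $\Omega=\SSS^d\times[1,\infty)$ on which the estimates of Lemmas~\ref{lem-1}--\ref{lem-2} hold, so the change of variable and Proposition~\ref{prop-3} can be applied once, on all of $[0,T]$, with the explicit transfer $\bigl|\log(|y_t+b|/|a+b|)\bigr|\le \mu+L'\|\bx\|_{p,\omega}^p\omega(0,T)$, giving $|y_t|\le(|a|+|b|-1)e^{\mu+L'\|\bx\|_{p,\omega}^p\omega(0,T)}$.

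Your proposal instead takes $\Gamma=\RR^d\setminus\{0\}$ and tries to handle passage near the origin by separating ``inside the unit ball'' from ``outside.'' As written this is a genuine gap: you cannot simply ``absorb the unit-ball contribution into $C$,'' because $h=\nabla\phi\cdot f\circ\phi^{-1}$ is \emph{unbounded} near the origin (the factor $\nabla\phi(z)\sim 1/|z|$ blows up while $f$ stays of order one), so the Whitney extension of Corollary~\ref{cor-1} is not available on a neighbourhood of $0$, Lemma~\ref{lem-4} cannot be invoked on any interval where $y$ approaches $0$, and Proposition~\ref{prop-3} gives no information there. The patching-by-excursions idea you hint at could in principle be made to work (on each maximal excursion into $\{|y|\ge e\}$, restart Proposition~\ref{prop-3} from the exit time, where $\rho=1$, and use $\omega(s,t)\le\omega(0,T)$), but you would then have to verify that the restarted change-of-variable and the a~priori bound apply on every such subinterval, and handle the possibility of infinitely many excursions; none of this is carried out. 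The paper's translation trick is the device that makes the argument go through in one stroke, and it is the ingredient missing from your proof.
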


\begin{proof}
Fix $T>0$.
With $\phi$ as above,
set $\psi(y)=\phi(b+y)$, where $b$ is chosen so that 
$\min_{t\in[0,T]}|b+y_t|\geq 1$.

We set $z_t=\psi(y_t)$ so that $z$ takes its values
in the set $\Omega=\SSS^d\times [1,+\infty)$.
Let us remark that $\nabla \psi\circ \psi^{-1}=\nabla\phi\circ \phi^{-1}$
and $f\circ \psi^{-1}=f(\phi^{-1}(\theta,\rho)-b)$.
The vector field $f(\cdot-b)$ is also a $\Lip(1+\gamma)$ from $\RR^d$ to $L(\RR^m,\RR^d)$-vector field.

It follows from Lemma~\ref{lem-2} and Corollary~\ref{cor-1} that 
\begin{equation*}
h(\theta,\rho)=\nabla\psi\circ \psi^{-1}(\theta,\rho)
f\circ\psi^{-1}(\theta,\rho)
\end{equation*}
may be extended to a bounded $\Lip(1+\gamma)$-vector field from $\RR^d$ to $L(\RR^m,\RR^d)$
still denoted by $h$.

From Proposition~\ref{prop-1}, it follows that 
$\by_t=a+\int_0^t f(y_s)\vd\bx_s$ has indeed a global 
solution.
In addition, 
$|z_t-z_0|\leq \mu+L'\|\bx\|_{p,\omega}^p\omega(0,T)$.
This implies that
\begin{equation*}
\left|\log\frac{|y_t+b|}{|a+b|}\right|\leq \mu+L'\|\bx\|^p_{p,\omega}\omega(0,T).
\end{equation*}
From this, we deduce that 
\begin{equation*}
|y_t|\leq (|a|+|b|-1)e^{\mu+L'\|\bx\|_{p,\omega}^p\omega(0,T)},\ t\in[0,T].
\end{equation*}
This proves that no explosion occurs at time $T$.
\end{proof}

\subsection{Case of non-geometric rough paths}

We now consider that \eqref{eq-20} is driven by a non-geometric
rough path $\bx$ with values in $T(\RR^m)$ and of $p$-variation controlled by $\omega$. 
This rough path may be decomposed as the sum of 
a geometric rough path $\widehat{\bx}$ and a path $\beta$ of finite
$p/2$-variation value values in $\RR^m\otimes\RR^m$.

Using $\psi$ and the change of variable formula \eqref{eq-21}, 
we get that $z_t=\psi(y_t)$ is solution to 
\begin{multline}
\label{eq-22}
\bz_t=z_0+\int_0^t \nabla\psi\circ\psi^{-1}(z_s)f\circ\psi^{-1}(z_s)\vd\widehat{\bx}_s\\
+\int_0^t \nabla \psi\circ \psi^{-1}(z_s)
(f\cdot\nabla f)(\psi^{-1}(z_s)) d\beta_s.
\end{multline}

We have seen in \cite{lejay-victoir06a}
that there exists a solution to \eqref{eq-22}, 
provided that $h_1(z)=\nabla\psi\circ\psi^{-1}(z)f\circ\psi^{-1}(z)$ may be extended to  
a bounded $\Lip(1+\gamma)$-vector field from $\RR^d$ to $L(\RR^d,\RR^m)$, and 
$h_2(z)=\nabla \psi\circ \psi^{-1}(z)
(f\cdot\nabla f)\circ\psi^{-1}(z)$
may be extended to a bounded $\Lip(\gamma)$-vector field from $\RR^d$ to $L(\RR^m\otimes\RR^m,\RR^d)$.

Yet if $h_1(z)$ may be extended to a bounded $\Lip(1+\gamma)$-vector field from $\RR^d$ to $L(\RR^m,\RR^d)$
thanks to Lemma~\ref{lem-2} and Corollary~\ref{cor-1},
nothing ensures that $f\cdot\nabla f$ is $\gamma$-Hölder continuous
and then that $h_2$ may be extended to a bounded $\Lip(\gamma)$-vector field
from $\RR^d$ to $L(\RR^m\otimes\RR^m),\RR^d)$.

\begin{proposition}
For a $\Lip(1+\gamma)$-vector field $f$ from $\RR^d$ to $L(\RR^m,\RR^d)$-vector field $f$
such that $f\cdot\nabla f$ is a $\Lip(\gamma)$-vector field from $\RR^d$
to $L(\RR^m\otimes\RR^m,\RR^d)$-vector field
with $2+\gamma>p$, then there exists a global solution to \eqref{eq-20}.
\end{proposition}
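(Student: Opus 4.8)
The plan is to reduce the non-geometric case to the already-established results of Section~\ref{sec:change} and Section~\ref{sec:log} by a logarithmic change of variable, exactly mimicking the structure of the proof of Proposition~\ref{prop-2}. Fix $T>0$. The first step is to choose, as in Proposition~\ref{prop-2}, a shift $b\in\RR^d$ such that $\min_{t\in[0,T]}|b+y_t|\ge 1$, and to set $\psi(y)=\phi(b+y)$ so that $z_t=\psi(y_t)$ takes values in $\Omega=\SSS^d\times[1,+\infty)$. Here one must be a little careful: unlike the geometric case, the a priori bound that justifies the existence of such a $b$ is not yet available, so I would first work with the solution up to its explosion time $T_{\mathrm{expl}}$, carry out the argument on any $[0,T]$ with $T<T_{\mathrm{expl}}$, and conclude at the end that $T_{\mathrm{expl}}=+\infty$ (the shift $b$ may depend on $T$, which is harmless since the final bound we extract is uniform in the relevant quantities).

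The second step is to apply the change of variable formula for non-geometric rough paths (the Lemma preceding \eqref{eq-21}) with the map $\psi$: this requires checking Hypotheses~\ref{hyp-3} and~\ref{hyp-4}. Hypothesis~\ref{hyp-3} is exactly the standing assumption on $f$ in the statement, namely that $f\cdot\nabla f$ is $\Lip(\gamma)$. For Hypothesis~\ref{hyp-4} we must verify that $h_1(z)=\nabla\psi\circ\psi^{-1}(z)\,f\circ\psi^{-1}(z)$ extends to a bounded $\Lip(1+\gamma)$-vector field on $\RR^d$ — this is precisely Lemma~\ref{lem-2} together with Corollary~\ref{cor-1} (applied to $f(\cdot-b)$, which is again $\Lip(1+\gamma)$) — and that $h_2(z)=\nabla\psi\circ\psi^{-1}(z)\,(f\cdot\nabla f)\circ\psi^{-1}(z)$ extends to a bounded $\Lip(\gamma)$-vector field on $\RR^d$. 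The latter is where the extra hypothesis on $f\cdot\nabla f$ is used: since $f\cdot\nabla f$ is now \emph{globally} $\Lip(\gamma)$ (not merely locally, as would be the case for a generic $\Lip(1+\gamma)$ field of linear growth), $f\cdot\nabla f$ has at most linear growth, and composing with $\nabla\psi\circ\psi^{-1}$, which decays like $|z|^{-1}$ by the estimates of Section~\ref{sec:log}, gives a bounded $\gamma$-Hölder field on $\Omega$; then Whitney's theorem (Theorem~\ref{thm-1}, first part) extends it to $\RR^d$. This yields that $z_t=\psi(y_t)$ solves \eqref{eq-22}, i.e. an RDE with \emph{bounded} coefficients $h_1,h_2$ driven by the geometric part $\widehat{\bx}$ and the bounded-variation part $\beta$.

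The third step is to derive the a priori bound on $z$. For the geometric part one invokes Proposition~\ref{prop-3} (or its consequence in Section~\ref{sec:global}); for the Young integral against $\beta$ one uses the standard estimate $\bigl|\int_0^t h_2(z_s)\,\dd\beta_s\bigr|\le C\|h_2\|_\infty\|\beta\|_{p/2,\omega}^{p/2}\omega(0,T)$ valid because $\beta$ has finite $p/2$-variation and $h_2$ is bounded. Combining these along a sequence of times on which $\omega$ increments are controlled — the same inductive scheme as in the proof of Proposition~\ref{prop-3} — gives $\sup_{t\in[0,T]}|z_t-z_0|\le C(1+\|\widehat{\bx}\|_{p,\omega}^p\omega(0,T)+\|\beta\|_{p/2,\omega}^{p/2}\omega(0,T))$, with $C$ depending only on $\|h_1\|_{\Lip}$, $\|h_2\|_{\Lip}$, $\gamma$, $p$. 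Unwinding $z=\psi(y)$ exactly as in Proposition~\ref{prop-2}, the $\rho$-component bound translates into $|y_t|\le(|a|+|b|-1)\exp\bigl(C(1+\|\widehat{\bx}\|_{p,\omega}^p\omega(0,T)+\|\beta\|_{p/2,\omega}^{p/2}\omega(0,T))\bigr)$ for $t\in[0,T]$, which is finite; hence $y$ cannot explode on $[0,T]$, and since $T$ was arbitrary, the solution is global.

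The main obstacle is the verification that $h_2$ extends to a \emph{bounded} $\Lip(\gamma)$ field: this is exactly the point at which the hypothesis on $f\cdot\nabla f$ enters essentially and cannot be dropped. One must check carefully that the $|z|^{-1}$ decay of $\nabla\psi\circ\psi^{-1}$ (from the estimates~\eqref{eq-8}–\eqref{eq-9} in Section~\ref{sec:log}) beats the at-most-linear growth of the globally-$\gamma$-Hölder field $f\cdot\nabla f$, in both the sup-norm and the Hölder seminorm, on the region $\rho\ge 1$; the computation is entirely parallel to the proof of Lemma~\ref{lem-1}, with $f$ there replaced by $f\cdot\nabla f$. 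The remaining two pieces — the extension of $h_1$ and the a priori estimate — are genuinely routine, being immediate citations of Lemma~\ref{lem-2}, Corollary~\ref{cor-1}, Proposition~\ref{prop-3}, and the classical Young inequality.
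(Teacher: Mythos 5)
Your proposal follows essentially the same route as the paper's proof: decompose $\bx=\widehat{\bx}+\beta$, apply the logarithmic change of variables $\psi=\phi(b+\cdot)$ and the non-geometric change-of-variable formula to arrive at \eqref{eq-22}, extend $h_1$ via Lemma~\ref{lem-2} and Corollary~\ref{cor-1}, extend $h_2$ via a Lemma~\ref{lem-1}-type argument (the point where the hypothesis on $f\cdot\nabla f$ is used) together with Whitney's theorem, derive an a~priori bound on $z$, and unwind as in Proposition~\ref{prop-2}. You usefully spell out the verification that $h_2$ is a bounded $\Lip(\gamma)$ field on $\Omega$, which the paper only asserts while pointing to the discussion preceding the proposition.

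Three remarks on details. First, your intermediate display $\bigl|\int_0^t h_2(z_s)\,\dd\beta_s\bigr|\le C\|h_2\|_\infty\|\beta\|_{p/2,\omega}^{p/2}\omega(0,T)$ is not the single-interval Young bound, which would read $\lesssim \|h_2\|_\infty\|\beta\|_{p/2,\omega}\,\omega(0,t)^{2/p}$ plus a higher-order remainder controlled by $H_\gamma(h_2)$; the form you wrote makes sense only as the \emph{accumulated} estimate after iterating over the subdivision of $[0,T]$, and you should say so explicitly. In fact the paper itself does not carry out this iteration, contenting itself with the local estimate $|z_t-z_0|\le C(\omega(0,t)^\theta+\omega(0,t)^{1/p})\|\bx\|_{p,\omega}$ and an appeal to the extension of the estimates of~\cite{lejay09a} to $(p,q)$-rough paths. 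Second, the paper's displayed RDE in the proof is driven by $\bx$ rather than $\widehat{\bx}$; this is presumably a slip, and your version with $\widehat{\bx}$ is the one consistent with \eqref{eq-22}. Third, you flag that $b$ may depend on $T$ and call this ``harmless since the final bound we extract is uniform in the relevant quantities.'' This deserves more care than you give it: $\|h\|_{\Lip}$ (through $|f(-b)|\le |f(0)|+\|\nabla f\|_\infty|b|$) and hence the constants in the a~priori bound do depend on $|b|$, so the uniformity is not automatic. This subtlety is already present in the paper's proof of Proposition~\ref{prop-2} and is inherited here; it is not a defect specific to your argument, but it should not be dismissed in a parenthetical.
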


\begin{proof}
Considering a solution $\bz$ to 
\begin{equation*}
\bz_t=z_0+\int_0^t h_1(z_s)\vd \bx_s+\int_0^t h_2(z_s)\vd\beta_s.
\end{equation*}
Then, for some constants $C_1$ and $C_2$ that depend on 
$\|h_1\|_{\Lip}$, 
$\|h_2\|_{\infty}$ and $H_\gamma(h_2)$, 
we know from the very
construction of a rough integral that for any $t\in[0,T]$, 
\begin{align*}
&\left|
z_t-z_0-h_1(z_0)\bx^1_{0,t}-\nabla h_1(z_0)\int_0^t \vd z_s\otimes\dd x_s
-h_2(z_0)\beta_{0,t}
\right|\leq C_1\omega(0,t)^{\theta},\\
&\left|
\int_0^t \vd z_s\otimes\dd x_s-h_1(z_0)\otimes 1\cdot \bx^2_{0,t}
\right|\leq C_2\omega(0,t)^\theta
\end{align*}
where $\theta>1$ and $\int \vd z_s\vd x_s$ denotes
the cross-iterated integral between $z$ and~$x$.
It follows that one can get an estimate of the type
\begin{equation*}
|z_t-z_0|\leq C(\omega(0,t)^\theta+\omega(0,t)^{1/p})\|\bx\|_{p,\omega}.
\end{equation*}
This estimate is less satisfactory that \eqref{eq-18}.
However, at the price of cumbersome computations, one should 
be able also to extend the results of \cite{lejay09a}
to deal with $(p,q)$-rough paths.

Now, the proof is similar to the one of Proposition~\ref{prop-1}.
\end{proof}

It is easy to construct a counter-example that show an explosion may occurs if we only require  linear growth of the vector fields.
\begin{example}
Consider the solution $y$ of the RDE in $\RR^2$,  $y_t=a+\int_0^t f(y_s)\vd x_s$  driven by the non-geometric  rough path
 $x_t=(1,0,(1\otimes 1)t)$  taking values in $1\oplus \RR \oplus (\RR \otimes \RR)$. This rough path lies above  the constant path at $0\in\RR$ and has only a pure area part which is symmetric and proportional to $t$.
Then by the above considerations~$y$ is also a solution to 
$y_t=a+\int_0^t (f \cdot \nabla f)(y_s)\vd s$.
Take the vector field $f\in \RR^2\to L(\RR,\RR^2)$ given by
$$
f(\xi)=(\sin(\xi_2) \xi_1,\xi_1 ), \ \xi=(\xi_1,\xi_2)\in\RR^2
$$
which is of linear growth and for which
$$
(f \cdot \nabla f)(\xi) = (\sin^2(\xi_2) \xi_1+\xi_1^2 \cos(\xi_2), \sin(\xi_2) \xi_1) .
$$
Take the initial point $a=(a_1,0)$ with $a_1>0$. Then $(y_t)_2=0$ and 
$
(y_t)_1 = a_1 +\int_0^t (y_s)_1^2 \vd s
$
so that $(y_t)_1\to+\infty$ in finite time. This proves that explosion may
occur in a finite time.
\end{example}

\paragraph*{Acknowledgement.} This article was written while the second author
was staying at \=Osaka University in a sabbatical leave and he whishes to thank prof. Aida
for his nice welcome and interesting discussions on the topic.

\begin{bibdiv}
\begin{biblist}

\bib{davie05a}{article}{
	author = {Davie, A.M.},
	title = {Differential Equations Driven by Rough Signals: an Approach 
		via Discrete Approximation},
     journal = {Appl. Math. Res. Express. AMRX},
	  year = {2007},
	volume = {2},
	 pages = {Art. ID abm009, 40},
}

\bib{doss77a}{article}{
    author = {Doss, H.},
     title = {Liens entre \'equations diff\'erentielles stochastiques et
              ordinaires},
   journal = {Ann. Inst. H. Poincar\'e Sect. B (N.S.)},
    volume = {13},
      year = {1977},
    number = {2},
     pages = {99--125},
}

\bib{sussmann78a}{article}{
    author = {Sussmann, H.J.},
     title = {On the gap between deterministic and stochastic ordinary
              differential equations},
   journal = {Ann. Probability},
    volume = {6},
      year = {1978},
    number = {1},
     pages = {19--41},
}

\bib{feyel}{article}{
   author={Feyel, D.},
   author={de La Pradelle, A.},
   title={Curvilinear integrals along enriched paths},
   journal={Electron. J. Probab.},
   volume={11},
   date={2006},
    number= {34},
   pages={860--892},
}

\bib{sewing-lemma}{article}{
   author={Feyel, D.},
   author={de La Pradelle, A.},
   author={Mokobodzki, G.},
   title={A non-commutative sewing lemma},
   journal={Electron. Commun. Probab.},
   volume={13},
   date={2008},
   pages={24--34},
}

\bib{friz06a}{article}{
	author = {Friz, P.},
    author = {Victoir, N.},
	title = {Euler Estimates of Rough Differential Equations},
   journal = {J. Differential Equations},
    volume = {244},
      year = {2008},
    number = {2},
     pages = {388--412},
}

\bib{friz06c}{book}{
	author = {Friz, P.},
	author = {Victoir, N.},
	title = {Multidimensional Stochastic Processes as Rough Paths. Theory and Applications},
	publisher = {Cambridge University Press},
	year = {2009},
}

\bib{gubinelli04a}{article}{
    author = {Gubinelli, M.},
     title = {Controlling rough paths},
   journal = {J. Funct. Anal.},
    volume = {216},
      year = {2004},
    number = {1},
     pages = {86--140},
}

\bib{kdv}{article}{
title={{R}ough solutions for the periodic {K}orteweg-de {V}ries equation},
author={{G}ubinelli, {M}.},
eprint={arXiv:math/0610006},
year={2006},
}

\bib{gubinelli-tindel}{article}{
    title={{R}ough evolution equation},
    author={{G}ubinelli, {M}.},
    author={{T}indel, {S}.},
    eprint={arXiv:0803.0552},
    year={2009},
}

\bib{rooted-ns}{article}{
   author={Gubinelli, M.},
   title={Rooted trees for 3D Navier-Stokes equation},
   journal={Dyn. Partial Differ. Equ.},
   volume={3},
   date={2006},
   number={2},
   pages={161--172},
}

\bib{lejay-rp}{article}{
	author = {Lejay, A.},
	title = {An introduction to rough paths},
	book={
		title= {S{\'e}minaire de probabilit\'es XXXVII},
		series = {Lecture Notes in Mathematics},
		publisher = {Springer-Verlag},
		volume = {1832},
		year = {2003}, 
	},
	pages = {1--59},
}

\bib{lejay-rp2}{article}{
	author = {Lejay, A.},
	title = {Yet another introduction to rough paths},
	year = {2009},	
	note={To appear in \emph{Séminaire de probabilités}, Lecture Notes in Mathematics, Springer-Verlag},
}

\bib{lejay-victoir06a}{article}{
	author = {Lejay, A.},
	author = {Victoir, N.},
	title = {On $(p,q)$-rough paths},
	journal = {J. Differential Equations},
	year = {2006},
	volume = {225},
	number = {1},
	pages = {103--133}
}

\bib{lejay09a}{article}{
    author = {Lejay, A.},
    title = {On rough differential equations},
    journal = {Electron. J. Probab.},
    volume={14},
    number={12},
    pages={341--364},
    year = {2009},
}

\bib{lyons98a}{article}{
	author = {Lyons, T.J.}, 
	title = {Differential equations driven by rough signals},
	journal =  {Rev. Mat. Iberoamericana},
	volume = {14},
	year =  {1998},
	number = {2},
	pages = {215--310},
} 

\bib{lyons02b}{book}{
	author = {Lyons, T.},
	author =  {Qian, Z.},
	title = {System Control and Rough Paths},
	series = {Oxford Mathematical Monographs},
	publisher = {Oxford University Press},
	year = {2002},
}

\bib{lyons06b}{inproceedings}{
	author = {Lyons, T.},
	author = {Caruana, M.},
	author = {L{\'e}vy, T.},
	title = {Differential Equations Driven by Rough Paths},
	book = {
	    title = {{\'E}cole d'{\'e}t{\'e} des probabilit{\'e}s de Saint-Flour XXXIV --- 2004},
	    editor = {Picard, J.},
	    series = {Lecture Notes in Mathematics},
	    year = {2007},
	    volume = {1908},
	 publisher = {Springer},
    }
}

\bib{stein70a}{book}{
    author = {Stein, E. M.},
     title = {Singular integrals and differentiability properties of
              functions},
    series = {Princeton Mathematical Series, No. 30},
 publisher = {Princeton University Press},
   address = {Princeton, N.J.},
      year = {1970},
}

\bib{whitney}{article}{
   author={Whitney, H.},
   title={Analytic extensions of differentiable functions defined in closed
   sets},
   journal={Trans. Amer. Math. Soc.},
   volume={36},
   date={1934},
   number={1},
   pages={63--89},
}

\end{biblist}
\end{bibdiv}


\end{document}